\documentclass[12pt]{article}

\usepackage{amsmath}
\usepackage{amssymb,amsbsy}
\usepackage{graphicx}
\usepackage{booktabs}
\usepackage{dsfont}
\usepackage{natbib}
\usepackage{pdfsync}
\usepackage{hyperref}
\usepackage{times}
\usepackage{bbm}
\usepackage{tikz}
\usepackage[lined,boxed,ruled,norelsize,algo2e]{algorithm2e}

\usepackage[small,bf]{caption}
\usepackage[top=1in,bottom=1in,left=1in,right=1in]{geometry}

\usetikzlibrary{arrows,intersections}
\usetikzlibrary{shapes.misc}

\tikzset{cross/.style={cross out, draw=black, minimum size=2*(#1-\pgflinewidth), inner sep=0pt, outer sep=0pt},
cross/.default={1pt}}

\newcommand{\vol}{\mathrm{vol}}

\renewcommand{\phi}{\varphi}

\newcommand{\mB}{\mathrm{B}}

\newcommand{\N}{\mathbb{N}}
\newcommand{\R}{\mathbb{R}}

\newcommand{\cX}{\mathcal{X}}

\def\ds1{\mathds{1}}
\renewcommand{\epsilon}{\varepsilon}

\newcommand{\argmin}{\mathop{\mathrm{argmin}}}

\newlength{\minipagewidth}
\setlength{\minipagewidth}{\textwidth}
\setlength{\fboxsep}{3mm}
\addtolength{\minipagewidth}{-\fboxrule}
\addtolength{\minipagewidth}{-\fboxrule}
\addtolength{\minipagewidth}{-\fboxsep}
\addtolength{\minipagewidth}{-\fboxsep}

\newcommand{\beq}{\begin{equation}}
\newcommand{\eeq}{\end{equation}}

\newcommand{\beqa}{\begin{eqnarray}}
\newcommand{\eeqa}{\end{eqnarray}}

\newcommand{\beqan}{\begin{eqnarray*}}
\newcommand{\eeqan}{\end{eqnarray*}}

\def\ba#1\ea{\begin{align*}#1\end{align*}} 
\def\banum#1\eanum{\begin{align}#1\end{align}} 

\newcommand{\fval}{\bold{fval}}
\newcommand{\LS}{\mathtt{line\_search}}
\newcommand{\diag}{\mathtt{diag}}
\newcommand{\sym}{\mathtt{sym}}
\newcommand{\tr}{\mathtt{tr}}
\newtheorem{definition}{Definition}
\newtheorem{proposition}{Proposition}
\newtheorem{theorem}{Theorem}
\newtheorem{lemma}{Lemma}
\newcommand{\BlackBox}{\rule{1.5ex}{1.5ex}}
\newenvironment{proof}{\par\noindent{\bf Proof\ }}{\hfill\BlackBox\\[2mm]}

\begin{document}

\title{Black-box optimization with a politician}

\author{S\'ebastien Bubeck\\
Microsoft Research\\
sebubeck@microsoft.com
\and
Yin Tat Lee\footnote{Most of this work were done while the author was at Microsoft Research, Redmond. The author was supported by NSF awards 0843915 and 1111109.}\\
MIT\\
yintat@mit.edu
}
\date{\today}

\maketitle

\begin{abstract} 
We propose a new framework for black-box convex optimization which is well-suited for situations where gradient computations are expensive. 
We derive a new method for this framework which leverages several concepts from convex optimization, from standard first-order methods (e.g. gradient descent or quasi-Newton methods) to analytical centers (i.e. minimizers of self-concordant barriers). We demonstrate empirically that our new technique compares favorably with state of the art algorithms (such as BFGS).
\end{abstract} 

\section{Introduction}
In standard black-box convex optimization \cite{NY83, Nes04, Bub15} first-order methods interact with an {\em oracle}: given a query point $x$, the oracle reports the value and gradient of the underlying objective function $f$ at $x$. In this paper we propose to replace the oracle by a {\em politician}. Instead of answering the original query $x$ the politician changes the question and answers a new query $y$ which is guaranteed to be better than the original query $x$ in the sense that $f(y) \leq f(x)$. The newly selected query $y$ also depends on the history of queries that were made to the politician. Formally we introduce the following definition (for sake of simplicty we write $\nabla f(x)$ for either a gradient or a subgradient of $f$ at $x$).

\begin{definition}
Let $\cX \subset \R^n$ and $f : \mathcal{X} \rightarrow \R$. A politician $\Phi$ for $f$ is a mapping from $ \mathcal{X} \times \cup_{k = 0}^{\infty} (\mathcal{X} \times \R \times \R^n)^k$ to $\mathcal{X}$ such that for any $k \geq 0, x \in \mathcal{X}, h \in (\mathcal{X} \times \R \times \R^n)^k$ one has $f(\Phi(x,h)) \leq f(x)$. Furthermore when queried at $x$ with history $h$ a politician for $f$ also output $f(\Phi(x,h))$ and $\nabla f(\Phi(x,h))$ (in order to not overload notation we do not include these outputs in the range of $\Phi$). 
\end{definition}

Let us clarify the interaction of a first-order method with a politician. Note that we refer to the couple (first-order method, politician) as the {\em algorithm}. Let $M : \cup_{k=0}^{\infty} (\cX \times \R \times \R^n)^k \rightarrow \cX$ be a first-order method and $\Phi$ a politician for some function $f: \cX \rightarrow \R$. The course of the algorithm $(M, \Phi)$ then goes as follows: at iteration $k+1$ one first calculates the method's query point $x_{k+1} = M(h_k)$ (with $h_0 = \emptyset$), then one calculates the politician's new query point $y_{k+1}=\Phi(x_{k+1}, h_k)$ and the first order information at this point $(f(y_{k+1}), \nabla f(y_{k+1}))$, and finally one updates the history with this new information $h_{k+1} = (h_k, (y_{k+1}, f(y_{k+1}), \nabla f(y_{k+1})))$. Note that a standard oracle simply corresponds to a politician $\mathcal{O}$ for $f$ such that $\mathcal{O}(x,h) = (x, f(x), \nabla f(x))$ (in particular the algorithm $(M, \mathcal{O})$ is the usual algorithm corresponding to the first-order method $M$).

The philosophy of the above definition is that it gives in some sense an automatic way to combine different optimization algorithms. Say for example that we wish to combine the ellipsoid method with gradient descent. One way to do so is to design an ``ellipsoidal politician": the politician keeps track of a feasible ellipsoidal region based on the previously computed gradients, and when asked with the query $x$ the politician chooses as a new query $y$ the result of a line-search on the line between $x$ and the center of current ellipsoid. Gradient descent with this ellipsoidal politician would then replace the step $x \leftarrow x - \eta \nabla f(x)$ by $x \leftarrow y - \eta \nabla f(y)$. The hope is that in practice such a combination would integrate the fast incremental progress of gradient descent with the geometrical progress of the ellipsoid method.

In this paper we focus on unconstrained convex optimization. We are particularly interested in situations where calculating a (sub)gradient has superlinear complexity (i.e., $\gg n$) such as in logistic regression and semidefinite programming. In such cases it is natural to try to make the most out of the computed gradients by incorporating geometric reasoning (such as in the ellipsoid method). We do so by introducing the {\em geometric politician} (Section \ref{sec:geopol}), which is based on a combination of the recent ideas of \cite{BLS15} with standard cutting plane/interior point methods machinery (through the notion of a ``center" of a set, see Section \ref{sec:center}). For a given first order method $M$, we denote by $M+$ the algorithm obtained by running $M$ with the geometric politician. We demonstrate empirically (Section \ref{sec:exp}) the effectiveness of the geometric politician on various standard first-order methods for convex optimization (gradient descent, Nesterov's accelerated gradient descent, non-linear conjugate gradient, BFGS). In particular we show that BFGS+ is a surprisingly robust and parameter-free algorithm with state of the art performance across a wide range of problems (both smooth and non-smooth).

\section{Affine invariant politician} \label{sec:affpol}
As mentioned above we assume that the complexity of computing the map $x \mapsto \nabla f(x)$ is superlinear. This implies that we can afford to have a politician such that the complexity of computing the map $(x, h) \mapsto \Phi(x,h)$ is $O(n \times \mathrm{poly}(k))$ (we think of the number of iterations $k$ as typically much smaller than the dimension $n$).
We show in this section that this condition is (essentially) automatically satisfied as long as the politician is affine invariant in the following sense (we use a slight abuse of language and refer to a map $f \mapsto \Phi_f$, where $\Phi_f$ is a politician for $f$, as a politician):

\begin{definition}
A politician $f \mapsto \Phi_f$ is called affine invariant if for any function $f$ and any affine map $T : \R^m \rightarrow \R^n$ such that $T(x) = z + Lx$ for some matrix $L$, $k \geq 0, x \in \R^n, (y_i, v_i, g_i) \in \R^m \times \R \times \R^n$, one has 
$$ T(\Phi_{f \circ T}(x,(y_i, v_i, L^{\top} g_i)_{i \in [k]})) = \Phi_f(T(x), (T(y_i), v_i, g_i)_{i \in [k]}) .$$
We say that an affine invariant politician has cost $\psi : \N \rightarrow \N$ if for any $f: \R^k \rightarrow \R$ the map $(x,h) \in \R^k \times (\R^k \times \R \times \R^k)^k \mapsto \Phi_f(x,h)$ can be computed in time $\psi(k)$.
\end{definition}

\begin{proposition} \label{prop:1}
Let $\Phi$ be an affine invariant politician with cost $\psi$. Then for any $f: \R^n \rightarrow \R$, $(y_i, v_i, g_i) \in \R^n \times \R \times \R^n, i \in [k]$ and $x, y_i \in y_1 + \mathrm{Span}(g_1, \hdots, g_k)$ one can compute $\Phi_f(x,(y_i, v_i, g_i)_{i \in[k]}) \in \R^n$ in time $\psi(k) + O(n k^2)$.
\end{proposition}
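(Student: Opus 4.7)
The plan is to use affine invariance to push the computation from $\R^n$ down to $\R^k$, where it costs only $\psi(k)$, and to bound the cost of the change of variables at $O(nk^2)$.

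First I would construct an affine map $T : \R^k \to \R^n$ whose image is the affine subspace $y_1 + \mathrm{Span}(g_1,\dots,g_k)$. Concretely, I would apply Gram--Schmidt to $g_1,\dots,g_k$ (padding with arbitrary orthonormal vectors if the $g_i$ are linearly dependent) to obtain a matrix $Q \in \R^{n \times k}$ with orthonormal columns whose column span contains $\mathrm{Span}(g_1,\dots,g_k)$; this takes $O(nk^2)$ time. I then set $T(u) = y_1 + Qu$, with linear part $L = Q$ in the notation of the affine invariance definition.

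Next I would pull back all the data. Since $x - y_1$ and $y_i - y_1$ lie in the column span of $Q$ by the hypothesis on $x$ and $y_i$, defining $x' := Q^\top(x - y_1)$ and $y'_i := Q^\top(y_i - y_1)$ gives $T(x') = x$ and $T(y'_i) = y_i$ (using $QQ^\top v = v$ on the column span). The pulled-back gradients are $g'_i := L^\top g_i = Q^\top g_i$. Each of these $2k+1$ matrix-vector products costs $O(nk)$, for a total of $O(nk^2)$. Invoking affine invariance,
\[
\Phi_f(x,(y_i,v_i,g_i)_{i\in[k]}) \;=\; T\bigl(\Phi_{f\circ T}(x',(y'_i,v_i,g'_i)_{i\in[k]})\bigr).
\]
The inner call is a politician evaluation in dimension $k$, which by the cost assumption takes time $\psi(k)$. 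Applying $T$ to the resulting vector in $\R^k$ is one more matrix-vector product of cost $O(nk)$.

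Summing the three contributions---$O(nk^2)$ to build $Q$ and compute the pullbacks, $\psi(k)$ for the dimension-$k$ politician call, and $O(nk)$ to push the answer back to $\R^n$---yields the stated bound $\psi(k) + O(nk^2)$. The only mildly delicate point is making sure the affine-invariance identity can actually be applied: it requires the history fed to $\Phi_{f\circ T}$ to be the pullback of the history of $\Phi_f$ under $T$, which is exactly what the definitions of $x', y'_i, g'_i$ ensure; note in particular that the values $v_i$ are unchanged because $(f\circ T)(y'_i) = f(y_i) = v_i$. Everything else is bookkeeping, so I do not expect any real obstacle beyond fixing $T$ and verifying that $x$ and the $y_i$ lie in its image, which is guaranteed by hypothesis.
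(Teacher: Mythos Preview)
Your proposal is correct and follows essentially the same route as the paper: construct an orthonormal basis $Q$ for $\mathrm{Span}(g_1,\dots,g_k)$ in $O(nk^2)$ time, set $T(u)=y_1+Qu$, pull back $x$, the $y_i$, and the $g_i$ into $\R^k$, invoke affine invariance to evaluate $\Phi_{f\circ T}$ there at cost $\psi(k)$, and push the answer back with one $O(nk)$ product. The only cosmetic difference is that the paper phrases the basis construction as a QR decomposition $G=QR$ and observes that the pulled-back gradients $Q^\top g_i$ are exactly the columns $R_i$ of $R$ (so no extra matrix-vector products are needed), whereas you compute $Q^\top g_i$ explicitly; both fit in the same $O(nk^2)$ budget.
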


\begin{proof}
Let $G$ be the $n \times k$ matrix with $i^{th}$ column given by $g_i$. We consider the $QR$ decomposition of $G$ which can be computed in time $O(n k^2)$, that is $Q$ is an $n \times k$ matrix and $R$ a $k\times k$ matrix such that $G = QR$ and $Q^{\top} Q = \mathrm{I}_k$. Let $T$ be the affine map defined by $T=y_1 + Q$. Note that since $x \in y_1 + \mathrm{Span}(g_1, \hdots, g_k)$ one has $x = T(Q^{\top}(x-y_1))$ (and similarly for $y_i$). Thus by affine invariance one has
\begin{align*}
\Phi_f(x, (y_i, v_i, g_i)) & = \Phi_f(T(Q^{\top}(x-y_1)), (T(Q^{\top}(y_i-y_1)), v_i, g_i)) \\
& = y_1 + Q \Phi_{f \circ T}(Q^{\top} (x-y_1), (Q^{\top} (y_i-y_1), v_i, R_i)) ,
\end{align*}
where $R_i$ is the $i^{th}$ column of $R$.
Furthermore by definition of the cost $\psi$ and since $f \circ T$ is defined on $\R^k$ we see that this last quantity can be computed in time $\psi(k) + O(n k^2)$, thus concluding the proof.
\end{proof}

The above proposition shows that with an affine invariant politician and a first order method $M$ verifying for any $(y_i, v_i, g_i)_{i \in [k]} \in (\R^n \times \R \times \R^n)^k$,
$$M((y_i, v_i, g_i)_{i \in [k]}) \in y_1 + \mathrm{Span}(y_1, \hdots, y_k, g_1, \hdots, g_k) ,$$
one can run $k$ steps of the corresponding algorithm in time $O(n k^2 + k \psi(k))$ plus the time to compute the $k$ function values and gradients of the underlying function $f$ to be optimized. Note that one gets a time of $O(n k^2)$ instead of $O(n k^3)$ as one can store the $QR$ decomposition from one step to the next, and updating the decomposition only cost $O(n k)$.

\section{Geometric politician} \label{sec:geopol}
We describe in this section the {\em geometric politician} which is based on ideas developed in \cite{BLS15}. A key observation in the latter paper is that if $f$ is a $\alpha$-strongly convex function minimized at $x^*$ then one has for any $x$,
\[
\left\|x^{*}-x-\frac{1}{\alpha}\nabla f(x)\right\|^{2}\leq\frac{\|{\nabla f(x)}\|^{2}}{\alpha^{2}}-\frac{2}{\alpha}\left(f(x)-f(x^{*})\right).
\]
This motivates the following definition:
$$\mB(x,\alpha,\fval) :=  \left\{ z \in \R^n : \left\|{z-x-\frac{1}{\alpha}\nabla f(x)}\right\|^{2} \leq \frac{\|{\nabla f(x)}\|^{2}}{\alpha^{2}}-\frac{2}{\alpha}\left(f(x)-\fval\right)\right\} .$$
In particular given the first order information at $y_1, \hdots, y_k$ one knows that the optimum $x^*$ lies in the region $R_k \subset \R^n$ defined by
\begin{equation}
R_k=\bigcap_{i\in[k]}B(y_i,\alpha,\fval)\text{ where }\fval=\min_{i\in[k]}f(y_i).\label{eq:feasible_region}
\end{equation}
Now suppose that given this first order information at $y_1,\hdots, y_k$ the first order method asks to query $x$. How should we modify this query in order to take into account the geometric information that $x^* \in R_k$? First observe that for any $z$, $\mB(z,\alpha, \fval)$ is contained in a halfspace that has $z$ on its boundary (in the limiting case $\alpha \rightarrow 0$ the set $\mB(z,\alpha, f(z))$ is exactly a halfspace). In particular if the next query point $y_{k+1}$ is the center of gravity of $R_k$ then we have that the volume of $R_{k+1}$ is at most $1-1/e$ times the volume of $R_k$ (see \cite{Gru60}), thus leading to an exponential convergence rate. However the region $R_k$ can be very large initially, and the center of gravity might have a large function value and gradient, which means that $R_k$ would be intersected with a large sphere (possibly so large that it is close to a halfspace). On the other hand the first order method recommends to query $x$, which we can think of as a local improvement of $y_k$, which should lead to a much smaller sphere. The issue is that the position of this sphere might be such that the intersection with $R_k$ is almost as large as the sphere itself. In order to balance between the geometric and function value/gradient considerations we propose for the new query to do a line search between the center of $R_k$ and the recommended query $x$. The geometric politician follows this recipe with two important modifications: (i) there are many choices of centers that would guarantee an exponential convergence rate  while being much easier to compute than the center of gravity, and we choose here to consider the {\em volumetric center}, see Section \ref{sec:center} for the definition and more details about this notion; (ii) we use a simple heuristic to adapt online the strong convexity parameter $\alpha$, namely we start with some large value for $\alpha$ and if it happens that the feasible region $R_k$ is empty then we know that $\alpha$ was too large, in which case we reduce it. We can now describe formally the geometric politician, see Algorithm \ref{alg:geopoli}. Importantly one can verify that the geometric politician is affine invariant and thus can be implemented efficiently (see the proof of Proposition \ref{prop:1}).

\begin{algorithm2e}
\caption{Geometric Politician}
\label{alg:geopoli}

\SetAlgoLined

\textbf{Parameter: }An upper bound on the strong convexity parameter $\alpha$.
(Can be $+\infty$.)

\textbf{Input: } Query $x$, past queries and the corresponding first order information $(y_i, f(y_i), \nabla f(y_i))_{i\in[k]}$.

Let $\fval=\min_{i\in[k]}f(y_i)$ and the feasible region $R_k(\alpha)=\bigcap_{i\in[k]}\mB(y_i,\alpha,\fval).$

\If{$R_k(\alpha)=\emptyset$}{

Let $\alpha$ be the largest number such that $R_k(\alpha)\neq\emptyset$.

$\alpha\leftarrow\alpha/4$.

}

Let $y_{k+1}=\argmin_{y \in \{t x + (1-t) c(R_k(\alpha)), t \in \R\}} f(y)$

where $c(R_k(\alpha))$
is the volumetric center of $R_k(\alpha)$ (see Section \ref{sec:center}).

\textbf{Output:} $y_{k+1}$, $f(y_{k+1})$ and $\nabla f(y_{k+1})$.

\end{algorithm2e}

\section{Volumetric center} \label{sec:center}
The volumetric barrier for a polytope was introduced in \cite{Vai96} to construct an algorithm with both the oracle complexity of the center of gravity method and the computational complexity of the ellipsoid method (see [Section 2.3, \cite{Bub15}] for more details and \cite{LSW15} for recent advances on this construction). Recalling that the standard logarithmic barrier $F_P$ for the polytope $P=\{x \in \mathbb{R}^n : a_i^{\top} x < b_i, i \in [m]\}$ is defined by
$$F_P(x) = - \sum_{i=1}^m \log(b_i - a_i^{\top} x) ,$$
one defines the volumetric barrier $v_P$ for $P$ by
$$v_P(x) = \mathrm{logdet}(\nabla^2 F_P(x) ) .$$
The volumetric center $c(P)$ is then defined as the minimizer of $v_P$. In the context of the geometric politician (see Algorithm \ref{alg:geopoli}) we are dealing with an intersection of balls rather than an intersection of halfspaces. More precisely the region of interest is of the form:
\[
R = \bigcap_{i=1}^{k}\left\{ x\in\R^n:\|x-c_{i}\|\leq r_{i}\right\} .
\]
For such a domain the natural self-concordant barrier to consider is:
\[
F_R(x) = -\frac{1}{2}\sum_{i=1}^{k}\log\left(r_{i}^{2}-\|x-c_{i}\|^{2}\right).
\]
The volumetric barrier is defined as before by
\[
v_R(x) = \mathrm{logdet} ( \nabla^{2} F_R(x) ) ,
\]
and the volumetric center of $R$ is the minimizer of $v_R$. It is shown in \cite{Ans04} that $v_R$ is a self-concordant barrier which means that the center can be updated 
(when a new ball is added to $R$) via few iterations of Newton's method. Often in practice, it takes less than 5 iterations to update the minimizer of a self-concordant barrier \cite{GV99, BDGV95} when we add a new constraint. Hence, the complexity merely depends on how fast we can compute the gradient and Hessian of $F_R$ and $v_R$.

\begin{proposition}
For the analytic barrier $F_R$, we have that 
\begin{align*}
\nabla F_{R}(x)  = &  A^{\top}1_{k\times1},\\
\nabla^{2}F_{R}(x)  = &  2A^{\top}A+\lambda^{(1)}I
\end{align*}
where $d$ is a vector defined by $\left(r_{i}^{2}-\|x-c_{i}\|^{2}\right)^{-1}$,
$A$ is a $k\times n$ matrix with $i^{th}$ row given by $d_{i}(x)(x-c_{i})$,
$\lambda^{(p)}=\sum_{i\in[k]}d_{i}^{p}(x)$ and $1_{k\times1}$
is a $k\times1$ matrix with all entries being $1$.

For the volumetric center, we have that
\begin{align*}
\nabla v_{R}(x) & = \left(\left(2\tr H^{-1}\right)\mathrm{I}+4H^{-1}\right)A^{\top}d+8A^{\top}\sigma, \\
\nabla^{2}v_{R}(x)  & = 48A^{\top}\Sigma A-64A^{\top}\left(A H^{-1}A^{\top}\right)^{(2)}A \\
  & + \left(8 \tr(D \Sigma)+2\lambda^{(2)}\tr(H^{-1})\right)I +4\lambda^{(2)}H^{-1}\\
   & +8 \tr(H^{-1})A^{\top}D A+16\sym\left(A^{\top}D A H^{-1}\right)\\
   & -4 \tr(H^{-2})A^{\top}D \mathrm{J} D A-8H^{-1}A^{\top}D \mathrm{J} D A H^{-1}\\
   & -8\sym(A^{\top}D \mathrm{J} D A H^{-2})-8\left(d^{\top}A H^{-1}A^{\top}d\right)H^{-1}\\
   & -16\sym\left(A^{\top}\diag\left(A H^{-2}A^{\top}\right) \mathrm{J} D A\right)\\
   & -32\sym\left(A^{\top}\diag(A H^{-1}A^{\top}d)A H^{-1}\right)
\end{align*}
where $H=\nabla^{2}F_{R}(x)$, $\sigma_{i}=e_{i}^{\top}A H^{-1}A^{\top}e_{i}$,
$e_{i}$ is the indicator vector with $i^{th}$ coordinate, $\mathrm{J}$
is a $k\times k$ matrix with all entries being 1, $\sym(B)=B+B^{\top}$,
$\diag(v)$ is a diagonal matrix with $\diag(v)_{ii}=v_{i}$, $\Sigma=\diag(\sigma)$,
and $B^{(2)}$ is the Schur square of $B$ defined by $B_{ij}^{(2)}=B_{ij}^{2}$.
\end{proposition}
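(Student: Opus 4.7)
The plan is to obtain all four formulas by direct differentiation, handling $F_R$ first and then applying Jacobi's formula to $v_R = \mathrm{logdet}(\nabla^2 F_R)$. Write $s_i(x) = r_i^2 - \|x-c_i\|^2$ so that $d_i(x) = 1/s_i(x)$ and $\nabla s_i = -2(x-c_i)$. Then $\nabla F_R(x) = \sum_i d_i(x)(x-c_i)$, which is exactly $A^{\top}1_{k\times1}$ by the definition of $A$. Differentiating once more and using $\nabla d_i = 2 d_i^2(x-c_i)$ gives $\nabla^2 F_R(x) = \sum_i d_i I + 2 \sum_i d_i^2 (x-c_i)(x-c_i)^{\top} = \lambda^{(1)} I + 2 A^{\top}A$, which is the stated Hessian. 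This handles the first two equations.

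For the volumetric barrier I would apply Jacobi's formula
\[
\partial_j v_R = \tr\!\left(H^{-1} \partial_j H\right), \qquad H = 2 A^{\top}A + \lambda^{(1)} I.
\]
The key bookkeeping inputs are $\partial_j \lambda^{(1)} = 2(A^{\top}d)_j$ and $\partial_j A_{il} = 2 d_i^2 (x-c_i)_j (x-c_i)_l + d_i \delta_{jl}$. Plugging these in and collecting the sums $\sum_i d_i^p (x-c_i)^{\otimes q}$ back into the shorthand symbols $A$, $D=\diag(d)$, $\lambda^{(p)}$, $H^{-1}$ produces the stated formula for $\nabla v_R$: the $\tr(H^{-1})$ and $H^{-1}$ terms come from $\partial_j \lambda^{(1)}$ and from the $d_i\delta_{jl}$ part of $\partial_j A$, while the $A^{\top}\sigma$ term arises from the quadratic $d_i^2(x-c_i)(x-c_i)^{\top}$ part, once one recognizes $\sigma_i = e_i^\top A H^{-1} A^\top e_i$ as the diagonal of $A H^{-1} A^\top$.

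For the Hessian of $v_R$ I would apply Jacobi's formula a second time,
\[
\partial_j \partial_k v_R = \tr\!\left(H^{-1}\partial_j\partial_k H\right) - \tr\!\left(H^{-1}(\partial_j H) H^{-1} (\partial_k H)\right).
\]
The first trace needs the second derivatives $\partial_j\partial_k d_i = 8 d_i^3 (x-c_i)_j(x-c_i)_k + 2 d_i^2 \delta_{jk}$ and the corresponding second derivative of $A^{\top}A$; these are the source of the $\Sigma$, $D\Sigma$, $\lambda^{(2)}$, and $A^{\top}D A$ terms. The second trace expands into a sum of four contractions of the two first-derivative pieces of $H$, and after rewriting products like $\sum_i d_i^p (A H^{-1} A^{\top})_{ii} (x-c_i)\cdots$ through the Schur square $(A H^{-1} A^{\top})^{(2)}$, the all-ones matrix $\mathrm{J}$ (used to implement outer products of column sums like $A^{\top}d$), and the symmetrization operator $\sym$, one obtains exactly the listed terms.

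The main obstacle is purely organizational: each of the four formulas is a routine application of matrix calculus, but Step 3 produces many raw tensor sums and the work lies in recognizing canonical re-expressions (e.g. $\sum_i d_i^2 (x-c_i)_j(x-c_i)_k \cdot (A H^{-1}A^{\top})_{ii} = (A^{\top}\Sigma A)_{jk}$ and $\sum_{i,i'} d_i d_{i'} (x-c_i)(x-c_{i'})^{\top} [AH^{-1}A^{\top}]_{ii'}^2 = A^{\top}(AH^{-1}A^{\top})^{(2)}A$), together with careful use of $\sym$ to collapse pairs of transpose-related contributions. Once the dictionary between raw index sums and the shorthand symbols is fixed, verifying each line of the claimed Hessian reduces to matching coefficients.
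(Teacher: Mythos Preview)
The paper states this proposition without proof; it is presented as a computational fact whose sole purpose is to justify the claim that each Newton step on $v_R$ can be implemented with a handful of matrix multiplications and linear solves. Your approach --- direct differentiation of $F_R$ followed by two applications of Jacobi's formula $\partial_j \mathrm{logdet}(H) = \tr(H^{-1}\partial_j H)$ and $\partial_j\partial_k \mathrm{logdet}(H) = \tr(H^{-1}\partial_j\partial_k H) - \tr(H^{-1}(\partial_j H)H^{-1}(\partial_k H))$ --- is exactly the natural derivation, and your computation of $\nabla F_R$ and $\nabla^2 F_R$ is correct. The dictionary you set up (recognizing $\sigma_i$ as the diagonal of $AH^{-1}A^\top$, using $\mathrm{J}$ to encode outer products of column sums, the Schur square for the cross term, and $\sym$ to collapse transpose pairs) is precisely what is needed to reassemble the raw index sums into the displayed closed forms. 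There is nothing to compare against in the paper; your outline is the proof.
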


The above proposition shows that one step of Newton method for analytic center requires 1 dense matrix multiplication and solving 1 linear system; and for volumetric center, it requires 5 dense matrix multiplications, 1 matrix inversion and solving 1 linear system if implemented correctly. Although the analytic center is a more popular choice for ``geometrical'' algorithms, we choose volumetric center here because it gives a better convergence rate \cite{Vai96,AV95} and the extra cost $\psi(k)$ is negligible to the cost of updating QR decomposition $nk$.

\section{Experiments} \label{sec:exp}

\setcounter{topnumber}{1}

In this section, we compare the geometric politician against two libraries
for first order methods, minFunc \cite{schmidt2012minfunc} and TFOCS \cite{becker2011templates}. Both are popular MATLAB
libraries for minimizing general smooth convex functions. Since the
focus of this paper is all about how to find a good step direction
using a politician, we use the exact line search (up to machine accuracy)
whenever possible. This eliminates the effect of different line searches
and reduces the number of algorithms we need to test. TFOCS is the only algorithm we use which does not use line search because they do not provide such option.
To compensate on the unfairness to TFOCS, we note that the
algorithm TFOCS uses is accelerated gradient descent and hence we
implement the Gonzaga-Karas's accelerated gradient descent \cite{gonzaga2013fine}, which
is specifically designed to be used with exact line search. Another
reason we pick this variant of accelerated gradient descent is because
we found it to be the fastest variant of accelerated gradient descent (excluding
the geometric descent of \cite{BLS15}) for our tested
data (Gonzaga and Karas also observed that on their own dataset).

The algorithms to be tested are the following:
\begin{itemize}
\item {[}SD{]} Steepest descent algorithm in minFunc.
\item {[}Nes{]} Accelerated gradient descent, General Scheme 2.2.6 in \cite{Nes04}.
\item {[}TFOCS{]} Accelerated gradient descent in TFOCS.
\item {[}GK{]} Gonzaga-Karas's of Accelerated Gradient Descent (Sec \ref{sub:implementation}).
\item {[}Geo{]} Geometric Descent \cite{BLS15}.
\item {[}CG{]} Non-Linear Conjugate Gradient in minFunc.
\item {[}BFGS{]} Broyden\textendash Fletcher\textendash Goldfarb\textendash Shanno
algorithm in minFunc.
\item {[}PCG{]} Preconditioned Non-Linear Conjugate Gradient in minFunc.
\item {[}$\emptyset$+{]} Geometric Politician itself (Sec \ref{sub:implementation}).
\item {[}GK$+${]} Using GK with Geometric Oracle (Sec \ref{sub:implementation}).
\item {[}BFGS$+${]} Using BFGS with Geometric Oracle (Sec \ref{sub:implementation}).
\end{itemize}
We only tested the geometric oracle on GK and BFGS because they are respectively the best
algorithms in theory and practice on our tested data.
The $\emptyset+$ algorithm is used as the control group to test if the
geometric politician by itself is sufficient to achieve good convergence
rate. We note that all algorithms except Nes are parameter free;
each step of SD, Nes, TFOCS, GK, Geo, CG takes $O(n)$ time and each
step of BFGS, PCG, $\emptyset+$, GK$+$ and BFGS$+$ takes roughly $O(nk)$
time for $k^{th}$ iteration.

\subsection{Details of Implementations\label{sub:implementation}}

The first algorithm we implement is the $\emptyset+$ algorithm which
simply repeatedly call the politician. As we will see, this algorithm
is great for non smooth problems but not competitive for smooth problems.

\begin{algorithm2e}
\caption{$\emptyset+$}
\SetAlgoLined

\textbf{Input: }$x_{0}$.

\For{$k\leftarrow 1, 2, \cdots$}{

Set $x_{k+1}\leftarrow\Phi_{f}(x_k, (x_i, f(x_i), \nabla f(x_i))_{i \in [k]})$.

}

\end{algorithm2e}

The second algorithm we implement is the accelerated gradient descent
proposed by Gonzaga and Karas \cite{gonzaga2013fine}. This algorithm uses line search to learn
the the smoothness parameter and strong convexity parameter, see Algorithm \ref{alg:GK}.
We disable the line ({*}) in the algorithm if $\Phi_{f}$ is a politician
instead of an oracle because $\gamma\geq\alpha$ does not hold for
the strong convexity parameter $\alpha$ if $\Phi_{f}$ is not an
oracle. 

\begin{algorithm2e}[t]
\caption{Gonzaga-Karas's variant of Accelerated Gradient Descent} \label{alg:GK}
\SetAlgoLined
\DontPrintSemicolon

\textbf{Input: }$x_{1}$.

$\gamma=2\alpha$, $v_{0}=x_{0}$ and $y_{0}=x_{0}$.

\For{$k\leftarrow 1, 2, \cdots$}{

$y_{k}\leftarrow\Phi_{f}(y_{k-1})$.

$x_{k+1}=\LS(y_{k},-\nabla f(y_{k}))$.

\lIf{$\alpha\geq\gamma/1.02$ and we are using first order oracle}{$\alpha=\gamma/2$. ({*})}

\lIf{$\alpha\geq\frac{\|\nabla f(y_{k})\|^{2}}{2(f(y_{k})-f(x_{k+1}))}$}{$\alpha=\frac{\|\nabla f(y_{k})\|^{2}}{20(f(y_{k})-f(x_{k+1}))}$.}

$G=\gamma\left(\frac{\alpha}{2}\|v_{k}-y_{k}\|^{2}+\left\langle \nabla f(y_{k}),v_{k}-y_{k}\right\rangle \right).$

$A=G+\frac{1}{2}\|\nabla f(y_{k})\|^{2}+(\alpha-\gamma)(f(x_{k})-f(y_{k}))$.

$B=(\alpha-\gamma)(f(x_{k+1})-f(x_{k}))-\gamma(f(y_{k})-f(x_{k}))-G$.

$C=\gamma(f(x_{k+1})-f(x_{k}))$.

$\beta=\frac{-B+\sqrt{B^{2}-4AC}}{2A}$, $\gamma=(1-\beta)\gamma+\beta\alpha$.

$v_{k+1}=\frac{1}{\gamma}((1-\beta)\gamma v_{k}+\beta(\alpha y_{k}-\nabla f(y_{k}))$.

}

\end{algorithm2e}

The third algorithm we implemented is the Broyden\textendash Fletcher\textendash Goldfarb\textendash Shanno
(BFGS) algorithm. This algorithm uses the gradients to reconstruct
the Hessian and use it to approximate Newton's method, see Algorithm \ref{alg:BFGS}. 
We note that another natural way to employ the politician with BFGS is to set $x_{k+1}=\LS(\Phi_{f}(x_{k}),p)$
and this runs faster in practice; however, this algorithm computes two gradients per iteration (namely $\nabla f(x_{k})$ and $\nabla f(\Phi_{f}(x_{k}))$) while we restrict ourselves to algorithms which compute one gradient per iteration.

\begin{algorithm2e}[t]
\caption{BFGS}
\label{alg:BFGS}
\SetAlgoLined
\DontPrintSemicolon

\textbf{Input: }$x_{1}$.

\For{$k\leftarrow 1, 2, \cdots$}{

$p=-\nabla f(x_{k})$.

\For{$i\leftarrow k-1, \cdots, 1$}{

$\alpha_{i}\leftarrow\left\langle s_{i},p\right\rangle /\left\langle s_{i},y_{i}\right\rangle $.

$p=p-\alpha_{i}y_{i}$.

}

$p=\left\langle s_{k-1},y_{k-1}\right\rangle /\left\langle y_{k-1},y_{k-1}\right\rangle p.$

\For{$i\leftarrow 1, \cdots, k-1$}{

$\beta_{i}\leftarrow\left\langle y_{i},p\right\rangle /\left\langle s_{i},y_{i}\right\rangle $.

$p=p+(\alpha_{i}-\beta_{i})y_{i}$.

}

$x_{k+1}=\Phi_{f}\left(\LS(x_{k},p)\right)$.

$s_{k}=x_{k+1}-x_{k}$, $y_{k}=\nabla f(x_{k+1})-\nabla f(x_{k})$.

}

\end{algorithm2e}

\subsection{Quadratic function}

We consider the function
\begin{equation} \label{eq:quad}
f(x)=(x-c)^{\top}D(x-c) ,
\end{equation}
where $D$ is a diagonal matrix with entries uniformly sampled from
$[0,1]$ and $c$ is a random vector with entries uniformly sampled
from the normal distribution $N(0,1)$. Since this is a quadratic
function, CG, BFGS and BFGS+ are equivalent and optimal, namely,
they output the minimum point in the span of all previous gradients.

\begin{figure}[t]
\centering
\includegraphics[width=0.7\textwidth]{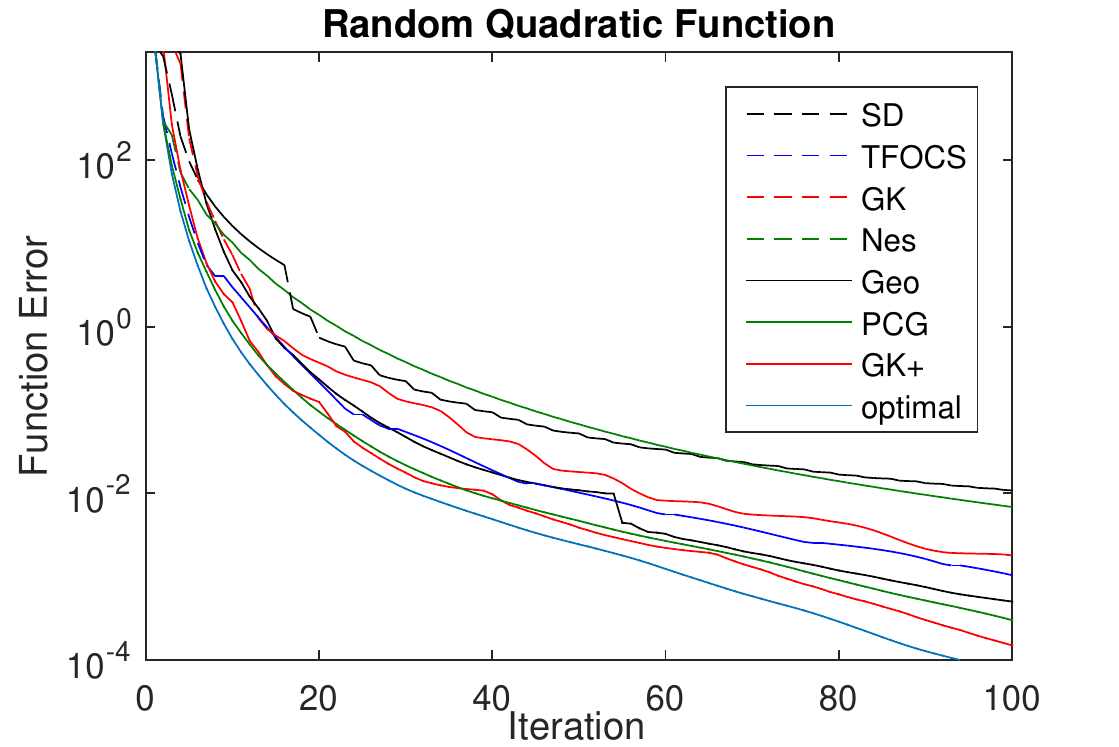}
\caption{Comparison of first-order methods for the function (\ref{eq:quad})
with $n=10000$.}
\end{figure}

\subsection{Variant of Nesterov's Worst Function}

\cite{Nes04} introduced the function
\[
f(x)=(1-x[1])^{2}+\sum_{k=1}^{n-1}(x[k]-x[k+1])^{2}
\]
and used it to give a lower bound for all first-order methods. To
distinguish the performance between CG, BFGS and BFGS$+$, we consider
the following non-quadratic variant
\begin{equation}
f(x)=g(1-x[1])+\sum_{k=1}^{n-1}g(x[k]-x[k+1])\label{eq:variant_nesterov}
\end{equation}
for some function $g$ to be defined. If we pick $g(x)=\left|x\right|$ then
all first order methods takes at least $n$ iterations to minimize $f$ exactly. On the other hand with $g(x)=\max(\left|x\right|-0.1,0)$ one of the minimizer of
$f$ is $(1,\frac{9}{10},\frac{8}{10},\cdots,\frac{1}{10},0,0,\cdots,0)$, and thus it takes at least $11$ iterations for first order methods to minimize $f$ in this case.
We ``regularize'' the situation a bit and consider the function
\[
g(x)=\begin{cases}
\sqrt{\left(x-0.1\right)^{2}+0.001^{2}}-0.001 & \text{if }x\geq0.1\\
\sqrt{\left(x+0.1\right)^{2}+0.001^{2}}-0.001 & \text{if }x\leq-0.1\\
0 & \text{otherwise}
\end{cases}.
\]
Since this function is far from quadratic, our algorithms ($\emptyset+$, GK$+$, BFGS$+$) converge
much faster. This is thus a nice example where the geometric politician helps a lot because the underlying dimension of the problem is small.

\begin{figure}[t]
\centering
\includegraphics[width=0.7\textwidth]{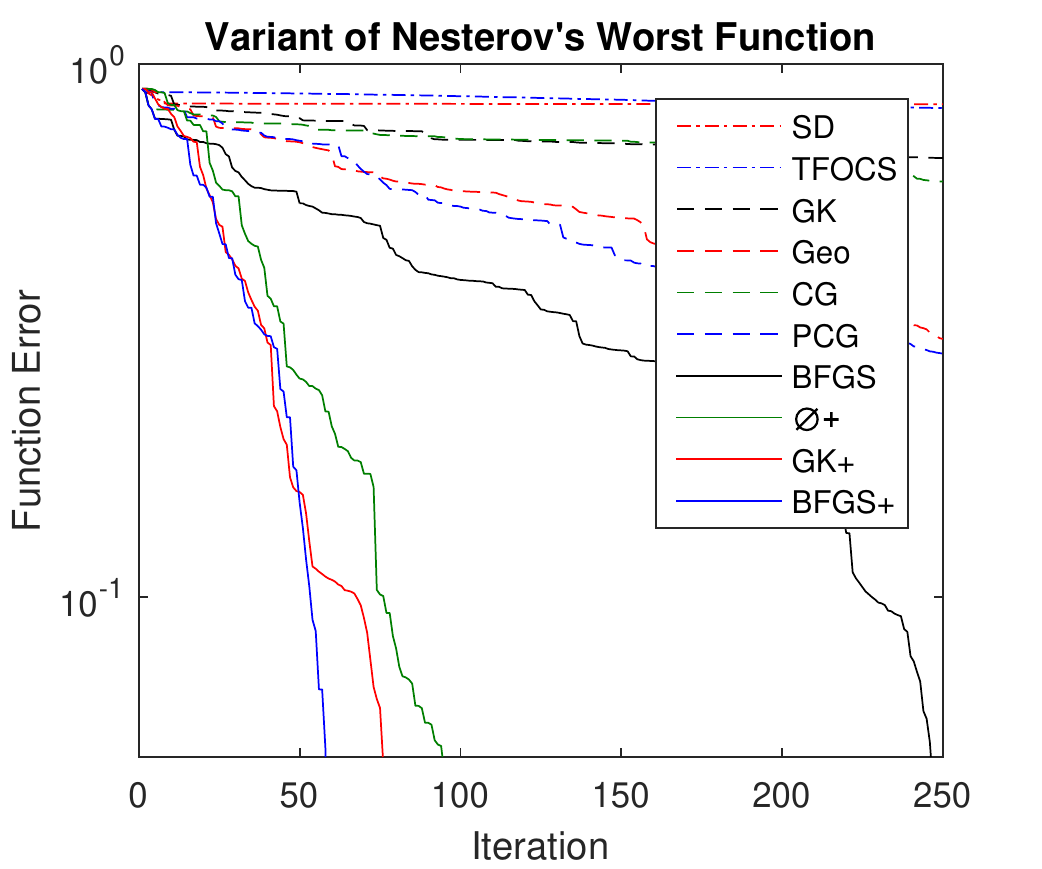}
\caption{Comparison of first-order methods for the function (\ref{eq:variant_nesterov})
with $n=10000$.}
\end{figure}

\subsection{Binary regression with smoothed hinge loss}

We consider the binary classification problem on the datasets from \cite{chang2011libsvm}. The problem is to minimize
the regularized empirical risk: 
\begin{equation}
f_{t}(x)=\frac{1}{n}\sum_{i=1}^{n}\varphi_{t}(b_{i}a_{i}^{T}x)+\frac{\lambda}{2}\left|x\right|^{2}\label{eq:log_regress}
\end{equation}
where $a_{i}\in\R^{d}$, $b_{i}\in\R$ are given by the datasets,
$\lambda$ is the regularization coefficient, $\varphi_{t}$ is the
smoothed hinge loss defined by
\[
\varphi_{t}(z)=\begin{cases}
0 & \text{if }z\leq-1\\
z+1-\frac{t}{2} & \text{if }z\geq-1+t\\
\frac{1}{2t}(z+1)^{2} & \text{otherwise}
\end{cases}
\]
and $t$ is the smoothness parameter. The usual choice for $t$ is
$1$, here we test both $t=1$ and $t=10^{-4}$. The latter
case is to test how well the algorithms perform when the function is
non-smooth. 

We note that for this problem it would be natural to compare ourselves with SGD (stochastic gradient descent) or more refined stochastic algorithms such as SAG \cite{LRSB12} or SVRG \cite{JZ13}. However since the focus of this paper is on general black-box optimization we stick to comparing only to general methods. It is an interesting open problem to extend our algorithms to the stochastic setting, see Section \ref{sec:disc}.

In figures \ref{fig:smooth_profile} and \ref{fig:non_smooth_profile},
we show the performance profile for problems in the LIBSVM datasets (and with different values for the regularization parameter $\lambda$). 
More precisely for a given algorithm we plot $x \in [1,10]$ versus the fraction of datasets that the algorithm can solve (up to a certain prespecified accuracy) in a number of iterations which is at most $x$ times the number of iterations of the best algorithm for this dataset.
Figure \ref{fig:smooth_profile} shows the case $t=1$ with the targeted
accuracy $10^{-6}$; Figure \ref{fig:non_smooth_profile} shows the
case $t=10^{-4}$ with the targeted accuracy $10^{-3}$. We see that TFOCS is slower than SD for many problems, this is simply because
SD uses the line search while TFOCS does not, and this
makes a huge difference for simple problems. Among algorithms taking
$O(n)$ time per iteration, CG and Geo perform the best, while
for the $O(n k)$ algorithms we see that BFGS, BFGS$+$ and GK$+$ perform the best. 
The gap in performance is particularly striking in the non-smooth case where BFGS$+$ is the fastest algorithm on almost all problems and all other methods (except GK$+$) are lagging far behind (for 20\% of the problems all other methods take 10 times more iterations than BFGS$+$ and GK$+$).

Finally in figures \ref{fig:smooth} and \ref{fig:non_smooth} we test
five algorithms on three specific datasets (respectively in the smooth and non-smooth case). In both
figures we see that BFGS$+$ performs the best for all three datasets. BFGS
performs second for smooth problems while GK$+$ performs second for
nonsmooth problems.

\begin{figure}[t]
\centering
\includegraphics[width=0.7\textwidth]{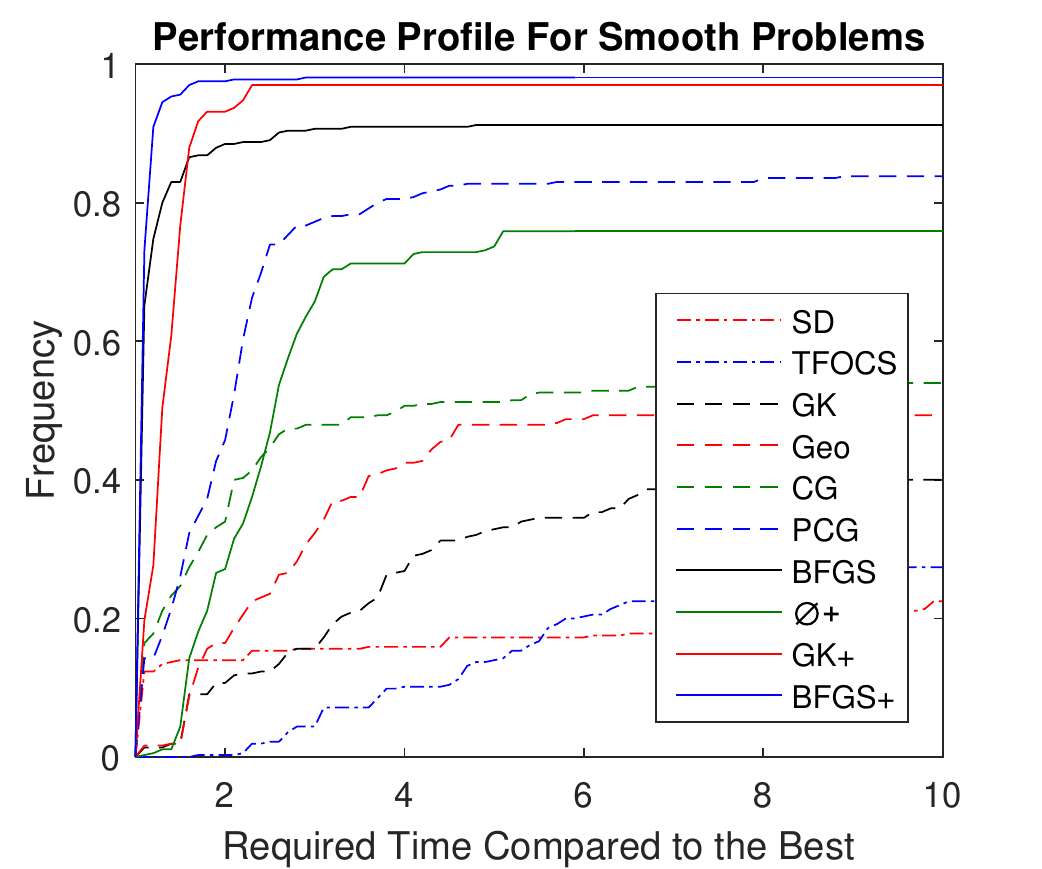}
\caption{Performance profile on problem (\ref{eq:log_regress}) with $t=1$
and $\lambda=10^{-4},10^{-5},10^{-6},10^{-7},10^{-8}$.\label{fig:smooth_profile}}
\end{figure}

\begin{figure}[t]
\centering
\includegraphics[width=0.7\textwidth]{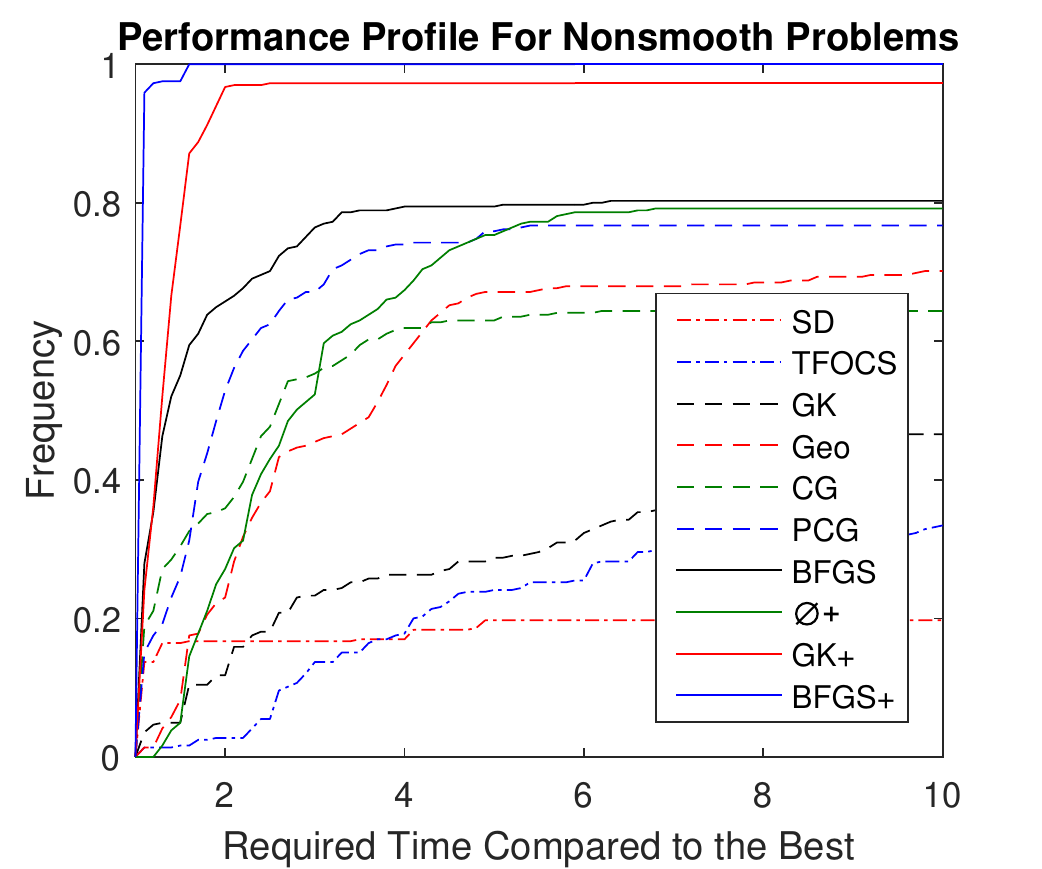}
\caption{Performance profile on problem (\ref{eq:log_regress}) with $t=10^{-4}$
and $\lambda=10^{-4},10^{-5},10^{-6},10^{-7},10^{-8}$.\label{fig:non_smooth_profile}}
\end{figure}

\begin{figure}[t]
\centering
\includegraphics[width=0.7\textwidth]{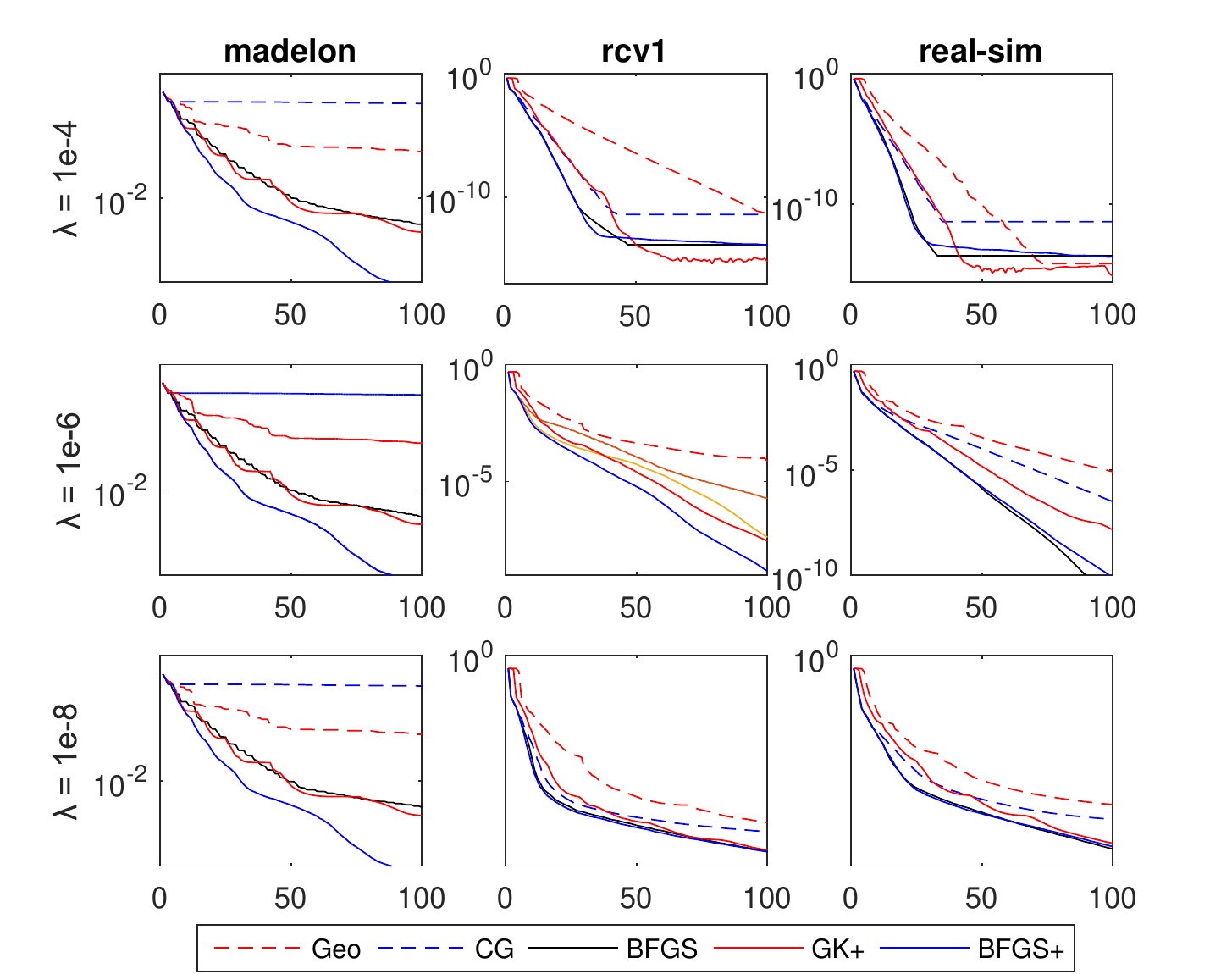}
\caption{Comparison between Geo, CG, BFGS, GK$+$, BFGS$+$ on problem (\ref{eq:log_regress})
with $t=1$ and $\lambda=10^{-4},10^{-6},10^{-8}$.\label{fig:smooth}}
\end{figure}

\begin{figure}[t]
\centering
\includegraphics[width=0.7\textwidth]{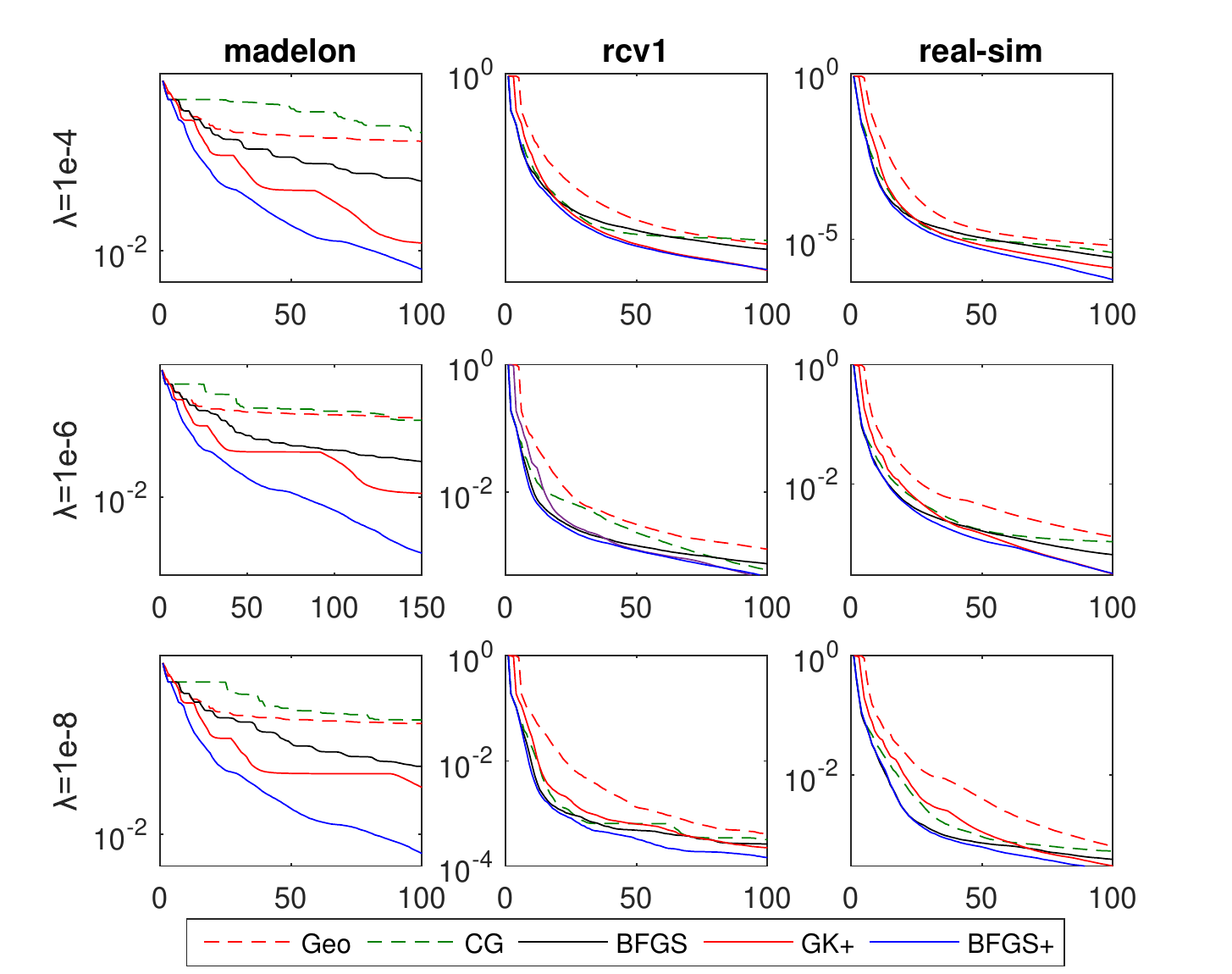}
\caption{Comparison between Geo, CG, BFGS, GK$+$, BFGS$+$ on problem (\ref{eq:log_regress})
with $t=10^{-4}$ and $\lambda=10^{-4},10^{-6},10^{-8}$.\label{fig:non_smooth}}
\end{figure}

\subsection{Summary}
The experiments show that BFGS$+$ and BFGS perform the best among all methods for smooth test problems while BFGS$+$ and GK$+$ perform the best for nonsmooth test problems. The first phenomenon is due to the optimality of these algorithm for quadratic problems. We leave the explanation for the second phenomenon as an open problem. At least, the experiments show that this is not due to the geometric oracle itself since $\emptyset+$ is much slower, and this is not due to the original algorithm since GK performs much worse than GK$+$ for those problems. Overall these experiments are very promising for the geometric oracle as a replacement of quasi Newton method for non-smooth problems and as a general purpose solver due to its robustness.

\section{Discussion} \label{sec:disc}
First order methods generally involve only very basic operations at each step (addition, scalar multiplication). In this paper we formalize each step's operations (besides the gradient calculation) as the work of the politician. We showed that the cost per step of an affine invariant politician $\psi(k)$ is negligible compared to the gradient calculation (which is $\Omega(n)$). This opens up a lot of possibilities: instead of basic addition or scalar multiplication one can imagine computing a center of gravity, solving a linear program, or even searching over an exponential space (indeed, say $k<30$ and $n>10^{10}$, then $2^k < n$). Our experiments demonstrate the effectiveness of this strategy. On the other hand from a theoretical point of view a lot remains to be done. For example one can prove results of the following flavor:

\begin{theorem}
Let $f$ such that $\alpha \mathrm{I} \preceq \nabla^2 f(x) \preceq \beta \mathrm{I}, \forall x \in \R^n$ and let $\kappa = \beta / \alpha$. Suppose that in the Geometric Politician we replace the volumetric center by the center of gravity or the center of the John ellipsoid. Let $y_k$ be the output of the $k^{th}$ step of SD$+$ with some initial point $x_0$. Then, we have that 
$$f(y_k) - f(x^*) \leq \kappa \left (1-\frac{1}{\Theta(\min(n \log(\kappa), \kappa))} \right)^k \left(f(x_0) - f(x^*) \right ).$$
and
$$ f(y_k) - f(x^*) \leq \frac{2 \beta R^2}{k+4}$$
where $R = \max_{f(x)\leq f(x_0)} \| x - x^* \|$.
\end{theorem}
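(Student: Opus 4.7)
The plan is to prove the two displayed inequalities separately; the first one is obtained by taking the better of two estimates, one via classical steepest-descent analysis (yielding the $1/\Theta(\kappa)$ rate) and one via a volume-cutting argument on the feasible region $R_k$ (yielding the $1/\Theta(n\log\kappa)$ rate). The glue between the first-order method and the politician is a monotonicity observation: since SD$+$ computes $x_{k+1}=\LS(y_k,-\nabla f(y_k))$ and then $y_{k+1}=\Phi_f(x_{k+1})$, and both steps only decrease $f$, we have $f(y_{k+1}) \leq f(y_k - \eta \nabla f(y_k))$ for every $\eta \in \R$. In particular SD$+$ inherits every worst-case guarantee of exact-line-search steepest descent on $\beta$-smooth convex functions.

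The sublinear bound then follows from the textbook descent lemma $f(y_{k+1}) \leq f(y_k) - \|\nabla f(y_k)\|^2/(2\beta)$, which telescopes to $f(y_k) - f^* \leq 2\beta R^2/(k+4)$ after noting that $y_k$ stays in the initial sublevel set (so $\|y_k - x^*\| \leq R$). The strongly convex version of the same inequality gives $f(y_{k+1})-f^*\leq(1-1/\kappa)(f(y_k)-f^*)$, which is the $1/\Theta(\kappa)$ branch of the first bound (the factor $\kappa$ in front is harmless).

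The harder half is the $1/\Theta(n\log\kappa)$ branch, which I would prove by tracking $\vol(R_k)$. Two geometric facts sandwich this volume: by strong convexity each ball $\mB(y_i,\alpha,\fval)$ contains the quadratic sublevel set at $y_i$ and hence the true sublevel set, so $R_k \supseteq \{x : f(x) \leq \fval_k\}$, which by smoothness contains a Euclidean ball of radius $\sqrt{2(\fval_k-f^*)/\beta}$ around $x^*$; dually, the ball centred at the best iterate has radius at most $\sqrt{2\beta(\fval_k-f^*)}/\alpha$ and contains $R_k$, giving an aspect ratio of $\kappa^{n/2}$. The crucial recursion $\vol(R_{k+1})\leq(1-c_0)\vol(R_k)$ then follows from Gr\"unbaum's inequality (with $c_0 = 1/e$) when $c(R_k)$ is the centre of gravity, or from a John-ellipsoid variant with $c_0 = \Theta(1/n)$: the new ball $\mB(y_{k+1},\alpha,\fval_{k+1})$ lies in the halfspace through $y_{k+1}$ perpendicular to $\nabla f(y_{k+1})$, which cuts off a constant fraction of $R_k$. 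Iterating and combining with the sandwich yields $\fval_k - f^* \leq \kappa^2(1-c_0)^{2k/n}(f_0-f^*)$, which one rewrites in the claimed form $\kappa(1-1/\Theta(n\log\kappa))^k(f_0-f^*)$ by absorbing the extra $\kappa$ factor into the rate.

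The main obstacle is the volume recursion itself, because the politician's output $y_{k+1}$ is \emph{not} the centre $c_k$ but the minimiser of $f$ on the line through $c_k$ and the SD query $x_{k+1}$, so the cutting hyperplane passes through an interior point of this segment rather than through $c_k$. I expect to handle this with a case split: either the line-search parameter keeps $y_{k+1}$ close to $c_k$ (so Gr\"unbaum applies essentially unchanged), or it moves substantially toward $x_{k+1}$, in which case $f(y_{k+1})$ is close to $f(x_{k+1})$ and one falls back on an SD-type progress bound. A cleaner unified argument would show that $c_k$ always lies on the cut-off side of the new halfspace, so Gr\"unbaum's constant is lost by only a bounded multiplicative factor.
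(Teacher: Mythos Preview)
Your treatment of the sublinear rate and of the $(1-1/\kappa)^k$ branch is essentially the paper's: both use only $f(y_{k+1})\le f(y_k)-\|\nabla f(y_k)\|^2/(2\beta)$ together with the politician's monotonicity, and telescope in the standard way.

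On the $n$-dependent branch, your main worry --- that the cutting hyperplane passes through $y_{k+1}$ rather than through $c_k$ --- dissolves once you notice that $y_{k+1}$ is the exact line minimiser on the segment through $x_{k+1}$ and $c_k$, so $\nabla f(y_{k+1})$ is orthogonal to $c_k-y_{k+1}$. Hence $c_k$ lies \emph{on} the hyperplane $\{z:\langle\nabla f(y_{k+1}),z-y_{k+1}\rangle=0\}$, and Gr\"unbaum (or the John-ellipsoid analogue) applies verbatim. No case split is needed.

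The genuine gap is that you never engage with the adaptive $\overline\alpha$. Your inner sandwich $R_k\supseteq\{f\le \fval_k\}$ is valid only when the algorithm's current $\overline\alpha$ is at most the true $\alpha$; if $\overline\alpha>\alpha$ the balls $\mB(y_i,\overline\alpha,\fval)$ need not contain the sublevel set (or even $x^*$), and your lower volume bound fails. The $\log\kappa$ in the statement is exactly the price of learning $\alpha$: the algorithm can halve $\overline\alpha$ at most $O(\log\kappa)$ times, and each epoch between restarts costs $\Theta(n\log(\kappa n/\varepsilon))$ iterations. Your final ``rewrite'' of $\kappa^2(1-c_0)^{2k/n}$ as $\kappa(1-1/\Theta(n\log\kappa))^k$ is not a valid algebraic step and does not produce the $\log\kappa$; that factor must come from the restart accounting.

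The paper takes a different route to convert small volume into small gap. Rather than your inner-ball sandwich, it proves a geometric lemma: if $\vol R_k$ is tiny relative to $D^n\omega_n$, then $R_k$ is forced to lie within a thin shell near the boundary of \emph{some} ball $\mB(y_i,\overline\alpha,\fval_k)$. The SD step then shrinks every ball's squared radius by $2(f(y_k)-f^*)/(\overline\alpha\,\kappa)$ (because $\fval$ drops by at least $(f(y_k)-f^*)/\kappa$), so unless the gap is already $\le\varepsilon(f_0-f^*)$, the shrunken $i$-th ball misses $R_k$ entirely and $R_{k+1}=\emptyset$, triggering a restart. This argument never assumes $\overline\alpha\le\alpha$, which is why it survives the adaptive setting; your sandwich argument would need a separate fix for that regime.
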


This claim says that, up to a logarithmic factor, SD$+$ enjoys simultaneously the incremental progress of gradient descent and the geometrical progress of cutting plane methods. There are three caveats in this claim:
\begin{itemize}
\item We use the center of gravity or the center of the John ellipsoid instead of the volumetric center. 
Note however that it is well-known that the volumetric center is usually more difficult to analyze, \cite{Vai96,AV95}.
\item The extraneous $\log(\kappa)$ comes from the number of potential restart when we decrease $\alpha$.
Is there a better way to learn $\alpha$ that would not incur this additional logarithmic term?
\item \cite{BLS15} shows essentially that one can combine the ellipsoid method with gradient descent to achieve the optimal $1-\sqrt{1/\kappa}$ rate. Can we prove such a result for SD$+$?
\end{itemize}

The geometric politician could be refined in many ways. Here are two simple questions that we leave for future work:
\begin{itemize}
\item One can think that gradient descent stores 1 gradient information, accelerated gradient descent stores 2 gradient information, and our method stores all past gradient information. We believe that neither 1, 2 nor all is the correct answer. Instead, the algorithm should dynamically decide the number of gradients to store based on the size of its memory, the cost of computing gradients, and the information each gradient reveals. 
\item Is there a stochastic version of our algorithm? How well would such a method compare with state of the art stochastic algorithms such as SAG \cite{LRSB12} and SVRG \cite{JZ13}?
\end{itemize}

\appendix 

\section{Convergence of SD$+$}
Let $f$ such that $\alpha\mathrm{I}\preceq\nabla^{2}f(x)\preceq\beta\mathrm{I}$
for all $x\in\mathbb{R}^{n}$ and let $\kappa=\beta/\alpha$. Let
$y_{k}$ be the output of the $k^{th}$ step of SD$+$ (where the volumetric center is replaced by the center of gravity or the center of the John ellipsoid) with some initial
point $x_{0}$. We prove two rates of convergence for SD$+$, one with the condition number $\kappa$, and one with the ambient dimension $n$. We start by the former.

\begin{theorem}
\label{th:kappa_convergence} One has
\[
f(y_{k})-f(x^{*})\leq\left(1-\frac{1}{\kappa}\right)^{k}\left(f(x_{0})-f(x^{*})\right)
\]
and
\[
f(y_{k})-f(x^{*})\leq\frac{2\beta r^{2}}{k+4}
\]
where $r=\max_{f(x)\leq f(x_{0})}\|x-x^{*}\|.$\end{theorem}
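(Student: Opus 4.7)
The plan is to exploit a single structural observation: the politician only ever decreases function values, so $f(y_{k+1}) \leq f(x_{k+1})$, where $x_{k+1} = \LS(y_k,-\nabla f(y_k))$ is the exact line-search iterate of ordinary steepest descent started at $y_k$. Consequently the sequence $\{f(y_k)\}$ inherits (at least) the per-step decrease of vanilla SD applied to $y_k$, and the theorem reduces to standard analyses. Concretely, since exact line search dominates the fixed step $1/\beta$, the $\beta$-smooth descent lemma gives the workhorse inequality
\[
f(y_{k+1}) \leq f(x_{k+1}) \leq f(y_k) - \tfrac{1}{2\beta}\|\nabla f(y_k)\|^2.
\]

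For the linear rate I would next invoke $\alpha$-strong convexity in the form $\|\nabla f(y_k)\|^2 \geq 2\alpha(f(y_k)-f(x^*))$, substitute, and iterate; this immediately produces $f(y_k)-f(x^*) \leq (1-1/\kappa)^k (f(x_0)-f(x^*))$.

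For the sublinear rate I first note that $\{f(y_k)\}$ is non-increasing, so every $y_k$ lies in the sublevel set $\{f \leq f(x_0)\}$ and hence $\|y_k-x^*\|\leq r$. Convexity then gives $\|\nabla f(y_k)\| \geq (f(y_k)-f(x^*))/r$. Writing $\delta_k = f(y_k)-f(x^*)$ and plugging into the workhorse inequality yields the familiar recursion $\delta_{k+1} \leq \delta_k - \delta_k^2/(2\beta r^2)$. Dividing by $\delta_k\delta_{k+1}$ and using monotonicity $\delta_{k+1}\leq\delta_k$ gives $1/\delta_{k+1} - 1/\delta_k \geq 1/(2\beta r^2)$. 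The $\beta$-smoothness bound $\delta_0 \leq \tfrac{\beta}{2}\|y_0-x^*\|^2 \leq \beta r^2/2$ supplies the initial condition $1/\delta_0 \geq 2/(\beta r^2)$, so summation yields $1/\delta_k \geq (k+4)/(2\beta r^2)$, which is the claim.

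The honest assessment is that there is no real obstacle: the theorem is essentially saying "SD$+$ is at least as good as SD", and the geometric machinery of the politician (volumetric/John/centroid center, adaptive $\alpha$, etc.) plays no role whatsoever in the argument — it only has to satisfy the defining property $f(y_{k+1})\leq f(x_{k+1})$. The genuinely hard part, which this appendix result deliberately sidesteps, is showing the geometric contribution of the center step, i.e. the $\min(n\log\kappa,\kappa)$ rate stated informally in the body theorem; that would require tracking the volume of $R_k$ under the center-of-gravity/John choice together with the adaptive restarts of $\alpha$, and is considerably more delicate.
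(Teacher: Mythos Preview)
Your proposal is correct and follows essentially the same argument as the paper: the key inequality $f(y_{k+1})\leq f(x_{k+1})\leq f(y_k)-\tfrac{1}{2\beta}\|\nabla f(y_k)\|^2$, then strong convexity for the linear rate, and the convexity bound $\delta_k\leq\|\nabla f(y_k)\|\,r$ together with the telescoping of $1/\delta_k$ and the smoothness estimate $\delta_0\leq\beta r^2/2$ for the sublinear rate. Your closing remark that the politician's geometric machinery is irrelevant here, and that the real content lies in the $n\log\kappa$ rate handled separately, matches the paper's structure exactly.
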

\begin{proof}
Let $\delta_{k}=f(y_{k})-f(x^*)$. Since $f$ is $\alpha$-strongly convex
we have that
\[
\delta_{k}\leq\frac{1}{2\alpha}\|\nabla f(y_{k})\|^{2}.
\]
Due to the decrease guarantee of politicians and the line search in steepest descent, we have that $f(y_{k+1})\leq f(x_{k+1})\leq f(y_{k})-\frac{1}{2\beta}\|\nabla f(y_{k}) \|^{2}$
and hence 
\begin{equation} \label{eq:forlater}
\delta_{k}-\delta_{k+1}\geq\frac{1}{2\beta}\|\nabla f(y_{k})\|^{2}\geq\frac{\delta_{k}}{\kappa}.
\end{equation}
Hence, we have $\delta_{k+1}\leq\left(1-\frac{1}{\kappa}\right)\delta_{k}$
and this gives the first inequality.

To obtain a rate independent of $\alpha$ we instead use the following estimate
\[
\delta_{k}\leq\left\langle \nabla f(y_{k}),y_{k}-x^{*}\right\rangle \leq\|\nabla f(y_{k})\| \cdot \|y_{k}-x^{*}\|.
\]
Using the decrease guarantee of politicians and line search we have
that $f(y_{k})\leq f(x_{k})\leq f(y_{k-1})\leq\cdots\leq f(x_{0})$,
and thus by definition of $R$:
\[
\|y_{k}-x^{*}\|\leq r.
\]
Due to the line search in steepest descent again, we have that 
\[
\delta_{k}-\delta_{k+1}\geq\frac{1}{2\beta}\|\nabla f(y_{k})\|^{2}\geq\frac{1}{2\beta}\left(\frac{\delta_{k}}{r}\right)^{2}.
\]
Since 
$\delta_{k}\geq\delta_{k+1}$, we have
\[
\frac{1}{\delta_{k+1}}-\frac{1}{\delta_{k}}=\frac{\delta_{k}-\delta_{k+1}}{\delta_{k}\delta_{k+1}}\geq\frac{\delta_{k}-\delta_{k+1}}{\delta_{k}^{2}}\geq\frac{1}{2\beta r^{2}}.
\]
So, by induction, we have that $\frac{1}{\delta_{k}}\geq\frac{1}{\delta_{0}}+\frac{k}{2\beta r^{2}}$.
Now, we note that 
\[
\delta_{0}\leq\left\langle \nabla f(x^{*}),x_{0}-x^{*}\right\rangle +\frac{\beta}{2}\|x_{0}-x^{*}\|^{2}\leq\frac{\beta r^{2}}{2}.
\]
Thus, we have that
\[
\delta_{k}\leq\frac{2\beta r^{2}}{k+4}.
\]

\end{proof}
We now turn to the dimension dependent analysis of SD$+$. We first show a simple geometric result, namely that if an intersection of spheres has a ``small'' volume then the intersection must lie close 
close to the boundary of one of the spheres.
\begin{lemma}
\label{lem:width_R}Let $R=\cap_{i=1}^{k}\{x\in\mathbb{R}^{n}:\|x-c_{i}\|\leq r_{i}\}$, $D=\max_{i \in [k]}r_{i}$, and $\omega_{n}$ the volume of the unit ball in $\R^n$.
Then, there exists $i \in [k]$ such that for all $x\in R$,
\[
\|x-c_{i}\|^{2}\geq r_{i}^{2}-24k^{2}\left(\frac{\vol R}{D^{n}\omega_{n}}\right)^{1/n}D^{2} .
\]
\end{lemma}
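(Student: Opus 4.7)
The plan is to argue by contradiction: suppose that for every $i\in[k]$ there exists some $x_i \in R$ with $\|x_i - c_i\|^2 < r_i^2 - \tau$, where $\tau = 24 k^2(\vol R/(D^n \omega_n))^{1/n} D^2$ (or more than suffices, the weaker $\tau \gtrsim k \eta D^2$). I will show this forces $R$ to contain a ball large enough to violate the stated volume bound. The statement we want will then follow: at least one sphere $\|x-c_i\|=r_i$ must ``pinch'' $R$.

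The centerpiece of the argument is to consider the centroid $\bar x = \frac{1}{k}\sum_{i=1}^k x_i$. Since each $x_i \in R$ and $R$ is convex (an intersection of balls), $\bar x \in R$. Next, for each index $j$, I would use the convexity of $v \mapsto \|v\|^2$ (i.e.\ Jensen's inequality) to write
\[
\|\bar x - c_j\|^2 \;\le\; \frac{1}{k}\sum_{i=1}^k \|x_i - c_j\|^2.
\]
Every term with $i \neq j$ is at most $r_j^2$ because $x_i \in R \subseteq B(c_j,r_j)$, and the single term with $i=j$ is strictly less than $r_j^2-\tau$ by the contradiction hypothesis. Summing gives $\|\bar x - c_j\|^2 \le r_j^2 - \tau/k$ for every $j$.

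From this I would derive a uniform interior margin by the elementary identity
\[
r_j - \|\bar x - c_j\| \;=\; \frac{r_j^2 - \|\bar x - c_j\|^2}{r_j + \|\bar x - c_j\|} \;\ge\; \frac{\tau/k}{2r_j} \;\ge\; \frac{\tau}{2kD}.
\]
Hence every point $x$ with $\|x - \bar x\| \le \rho := \tau/(2kD)$ satisfies $\|x-c_j\| \le \|\bar x-c_j\|+\rho \le r_j$ for all $j$, so the ball $B(\bar x,\rho)$ is contained in $R$. Therefore $\vol R \ge \omega_n \rho^n$, which rearranges to $\rho \le (\vol R/\omega_n)^{1/n} = (\vol R/(D^n\omega_n))^{1/n}\, D$.

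Combining $\rho = \tau/(2kD)$ with this inequality forces $\tau \le 2k(\vol R/(D^n\omega_n))^{1/n} D^2$, contradicting our choice of $\tau = 24 k^2 (\vol R/(D^n\omega_n))^{1/n} D^2$ (in fact any $\tau$ larger than $2k$ times that quantity works, so the $24k^2$ bound is comfortably satisfied). The only real choices in the argument are (a) using the centroid to turn $k$ local ``slack'' witnesses into a single point with simultaneous slack, and (b) converting a gap in squared distance into a Euclidean interior margin via the factoring step. I do not anticipate a serious obstacle; the slightly loose constant $24k^2$ in the statement allows room to be wasteful in, say, using $\|v\|^2$-Jensen instead of a sharper estimate.
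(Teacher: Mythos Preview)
Your proof is correct and takes a genuinely different, more elementary route than the paper.

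The paper argues via self-concordant barrier machinery: it considers the barrier $F_R(x)=-\tfrac12\sum_i\log(r_i^2-\|x-c_i\|^2)$, uses that $2F_R$ is a $k$-self-concordant barrier so that the Dikin ellipsoid $E$ at the analytic center $y$ satisfies $y+E\subset R\subset y+(k+2\sqrt{k})E$, turns $\vol E\le\vol R$ into a lower bound on $\det\nabla^2F_R(y)$, converts that to a trace bound by AM--GM, plugs in the explicit Hessian formula to locate an index $i$ with $r_i^2-\|y-c_i\|^2$ small, and finally uses the outer ellipsoid containment to extend from $y$ to all $x\in R$. Your argument bypasses all of this: the centroid-plus-Jensen step produces a single point with simultaneous slack $\tau/k$ against every sphere, which immediately yields an inscribed Euclidean ball and hence a volume lower bound. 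Besides being shorter and using nothing beyond convexity of $\|\cdot\|^2$, your approach actually delivers a sharper constant (anything exceeding $2k$ suffices, versus the paper's $24k^2$). The paper's route is natural in context because the surrounding algorithm already lives in the world of analytic/volumetric centers, but for the lemma as stated your proof is strictly simpler. One cosmetic remark: you may want to note explicitly that if the right-hand side $r_i^2-24k^2(\vol R/(D^n\omega_n))^{1/n}D^2$ is negative for some $i$ the claim is vacuous, so one may assume $\tau\le D^2$ (this also handles the edge case in your Jensen step where $r_j^2-\tau/k$ could be negative, which would already yield the contradiction).
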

\begin{proof}
Since $-\log(1-\|x\|^2)$ is a $1$-self concordant barrier function,
$2F_{R}$ is a $k$-self concordant function. Let $y$ be the minimizer
of $F_{R}$. Let $E=\{x\in\mathbb{R}^{n}:x^{\top}\nabla^{2}(2F_{R})(y)x\leq1\}$.
Theorem 4.2.6 in \cite{Nes04} shows that 
\begin{equation}
y+E\subset R\subset y+(k+2\sqrt{k})E.\label{eq:rounding_FR}
\end{equation}
In particular, this shows that $\vol E\leq\vol R$. We have that
\[
\left(\det\nabla^{2}F_{R}(y)\right)^{1/2}=\frac{\omega_{n}}{2^{n/2}}\frac{1}{\vol E}\geq\frac{\omega_{n}}{2^{n/2}}\frac{1}{\vol R}.
\]
By the AM-GM inequality, we have that
\begin{equation}
\frac{\tr\nabla^{2}F_{R}(y)}{n}\geq\frac{1}{2}\left(\frac{\omega_{n}}{\vol R}\right)^{2/n}.\label{eq:tr_vol_ineq}
\end{equation}
By Proposition \ref{prop:1}, we have that $\nabla^{2}F_{R}(y)=2A^{\top}A+\lambda^{(1)}\mathrm{I}$
and hence,
\begin{eqnarray*}
\tr\nabla^{2}F_{R}(y) & = & 2\tr A^{\top}A+n\lambda^{(1)}\\
 & = & 2\sum_{i=1}^{k}\frac{\|y-c_{i}\|^{2}}{(r_{i}^{2}-\|y-c_{i}\|^{2})^{2}}+n\sum_{i=1}^{k}\frac{1}{r_{i}^{2}-\|y-c_{i}\|^{2}}.
\end{eqnarray*}
Applying (\ref{eq:tr_vol_ineq}), we have that
\[
\frac{2}{n}\sum_{i=1}^{k}\frac{\|y-c_{i}\|^{2}}{(r_{i}^{2}-\|y-c_{i}\|^{2})^{2}}+\sum_{i=1}^{k}\frac{1}{r_{i}^{2}-\|y-c_{i}\|^{2}}\geq\frac{1}{2}\left(\frac{\omega_{n}}{\vol R}\right)^{2/n}
\]
So, there exists $i$ such that
\[
\frac{\|y-c_{i}\|^{2}}{(r_{i}^{2}-\|y-c_{i}\|^{2})^{2}}\geq\frac{n}{8k}\left(\frac{\omega_{n}}{\vol R}\right)^{2/n}\text{ or }\frac{1}{r_{i}^{2}-\|y-c_{i}\|^{2}}\geq\frac{1}{4k}\left(\frac{\omega_{n}}{\vol R}\right)^{2/n}.
\]
Using $\|y-c_{i}\|\leq r_{i}\leq D$ and $\vol R\leq D^{n}\omega_{n}$,
we have that
\begin{eqnarray*}
r_{i}^{2}-\|y-c_{i}\|^{2} & \leq & \max\left(\sqrt{\frac{8k}{n}}\left(\frac{\vol R}{\omega_{n}}\right)^{1/n}\|y-c_{i}\|,4k\left(\frac{\vol R}{\omega_{n}}\right)^{2/n}\right)\\
 & \leq & 4k\left(\frac{\vol R}{D^{n}\omega_{n}}\right)^{1/n}D^{2}.
\end{eqnarray*}
Therefore, the width of the ellipsoid $E$ in the direction $y-c_{i}$ is at most 
\[
r_i - \|y-c_i\| \leq
r_{i}-\sqrt{r_{i}^{2}-4k\left(\frac{\vol R}{D^{n}\omega_{n}}\right)^{1/n}D^{2}}.
\]
The right hand side of (\ref{eq:rounding_FR}) shows that, for all
$x\in R$, we have
\begin{eqnarray*}
\|x-c_{i}\| & \geq & r_{i}-(1+k+2\sqrt{k})\left(r_{i}-\sqrt{r_{i}^{2}-4k\left(\frac{\vol R}{D^{n}\omega_{n}}\right)^{1/n}D^{2}}\right)\\
 & \geq & r_{i}-(1+k+2\sqrt{k})\left(r_{i}-r_{i}\left(1-4k\left(\frac{\vol R}{D^{n}\omega_{n}}\right)^{1/n}\frac{D^{2}}{r_{i}^{2}}\right)\right)\\
 & \geq & \left[1-12k^{2}\left(\frac{\vol R}{D^{n}\omega_{n}}\right)^{1/n}\frac{D^{2}}{r_{i}^{2}}\right]r_{i}.
\end{eqnarray*}
Hence, we have
\[
\|x-c_{i}\|^{2}\geq\left[1-24k^{2}\left(\frac{\vol R}{D^{n}\omega_{n}}\right)^{1/n}\frac{D^{2}}{r_{i}^{2}}\right]r_{i}^{2}.
\]

\end{proof}

Finally, equipped with the above geometrical result, we can bound the convergence of SD$+$ using the dimension $n$. We start with a lemma taking care of the adaptivity to the strong convexity in the geometric politician.
\begin{lemma}
\label{lem:n_convergence}  
In the first $k=\Theta(n\log(\frac{\kappa n}{\varepsilon}))$ iterations, either
SD$+$ restarts the estimate of the strong convexity or
\[
f(y_k)-f(x^{*})\leq \varepsilon \left(f(x_{0})-f(x^{*})\right).
\]
\end{lemma}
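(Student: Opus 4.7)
The plan is to prove that under the no-restart hypothesis the feasible region $R_j(\alpha)$ shrinks by a constant factor at every step, and then to combine the resulting bound on $\vol R_k(\alpha)$ with Lemma~\ref{lem:width_R} and strong convexity to extract the desired bound on $f(y_k) - f(x^*)$. Throughout, $\alpha$ stays a valid strong-convexity lower bound, so $x^* \in R_j(\alpha)$ for every $j \le k$; moreover SD$+$ is $f$-monotone (both the line search and the politician decrease $f$), so $\fval_j = f(y_j)$. Let $c_j := y_j - \nabla f(y_j)/\alpha$ and $r_j^2 := \|\nabla f(y_j)\|^2/\alpha^2 - 2(f(y_j) - \fval_j)/\alpha$ denote the center and squared radius of $\mB(y_j, \alpha, \fval_j)$; smoothness gives $r_j \le \kappa \|y_j - x^*\| \le \kappa r$, so $D := \max_j r_j \le \kappa r$.

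The key geometric step is that the center $c := c(R_j(\alpha))$ is strictly excluded from the new ball, i.e. $c \notin \mB(y_{j+1}, \alpha, \fval_{j+1})$. Because $y_{j+1}$ minimizes $f$ on the line through the SD query $x_{j+1}$ and $c$, first-order optimality gives $\nabla f(y_{j+1}) \perp (c - y_{j+1})$, and a Pythagorean expansion yields
\[
\|c - c_{j+1}\|^2 = \|c - y_{j+1}\|^2 + \frac{\|\nabla f(y_{j+1})\|^2}{\alpha^2} = r_{j+1}^2 + \|c - y_{j+1}\|^2 + \frac{2(f(y_{j+1}) - \fval_{j+1})}{\alpha} \ge r_{j+1}^2,
\]
using $\fval_{j+1} \le f(y_{j+1})$. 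Since $R_{j+1}(\alpha) \subset \mB(y_{j+1}, \alpha, \fval_{j+1})$ is convex and misses $c$, a separating hyperplane places $R_{j+1}(\alpha)$ inside a halfspace that does not contain the centroid of $R_j(\alpha)$, so Grünbaum's inequality \cite{Gru60} gives $\vol R_{j+1}(\alpha) \le (1 - 1/e)\, \vol R_j(\alpha)$ (the John-ellipsoid variant is handled by the analogous statement for John centers).

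Iterating gives $\vol R_k(\alpha) \le (1 - 1/e)^{k-1}\,\omega_n D^n$, and Lemma~\ref{lem:width_R} provides an index $i \le k$ with
\[
r_i^2 - \|x^* - c_i\|^2 \le 24 k^2 (1 - 1/e)^{(k-1)/n} D^2.
\]
Via the strong-convexity inequality $\fval_k - f(x^*) \le (\alpha/2)(r_i^2 - \|x^* - c_i\|^2)$, one obtains $f(y_k) - f(x^*) \le 12\alpha k^2 D^2 (1-1/e)^{(k-1)/n}$. Bounding $D^2 \le \kappa^3 \|x_0 - x^*\|^2$ (using $r \le \sqrt\kappa \|x_0 - x^*\|$, a standard consequence of strong convexity on the $f$-sub-level set of $x_0$) together with $f(x_0) - f(x^*) \ge (\alpha/2)\|x_0 - x^*\|^2$ reduces the desired inequality to $(1 - 1/e)^{(k-1)/n} \le c\,\varepsilon/(k^2 \kappa^3)$ for a universal constant $c$, which holds as soon as $k = \Theta(n \log(\kappa n/\varepsilon))$.

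I expect the main obstacle to be the geometric claim $c \notin \mB(y_{j+1}, \alpha, \fval_{j+1})$: it is exactly what turns the politician's line-search step into a Grünbaum-style cut of $R_j(\alpha)$, and without it one cannot translate the per-step progress of SD into constant-factor volume shrinkage. Everything else is bookkeeping; in particular, the $n$-th root appearing in Lemma~\ref{lem:width_R} is what inflates the iteration count from the $O(\log(\kappa n/\varepsilon))$ that one would naively read off the per-step volume shrinkage to the $n \log(\kappa n/\varepsilon)$ asserted in the lemma.
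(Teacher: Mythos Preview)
Your volume-shrinkage argument---using the orthogonality of $\nabla f(y_{j+1})$ to the search line to place the center $c$ on (or outside) the boundary of the new ball, and then applying Gr\"unbaum---is correct and in fact makes explicit what the paper only alludes to in Section~\ref{sec:geopol} (``for any $z$, $\mB(z,\alpha,\fval)$ is contained in a halfspace that has $z$ on its boundary''). This is the same mechanism the paper relies on for the geometric volume decrease.

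There is, however, a genuine gap in your final step. The claim ``$\alpha$ stays a valid strong-convexity lower bound, so $x^*\in R_j(\alpha)$'' is not justified by the no-restart hypothesis. The algorithm's running estimate $\overline{\alpha}$ may well exceed the true strong-convexity modulus; in that regime the balls $\mB(y_i,\overline{\alpha},\fval)$ need not contain $x^*$, and ``no restart'' tells you only that $R_k(\overline{\alpha})\neq\emptyset$, not that $x^*$ lies in it. Consequently you cannot evaluate Lemma~\ref{lem:width_R} at $x=x^*$, nor can you invoke the inequality $\fval_k-f(x^*)\le(\alpha/2)(r_i^2-\|x^*-c_i\|^2)$, which is precisely the strong-convexity containment statement and requires $\overline{\alpha}$ to be at most the true modulus.

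The paper circumvents this by never evaluating at $x^*$. It instead uses the SD progress to certify a drop in $\fval$: from \eqref{eq:forlater} one has $\fval_{k+1}\le\fval_k-(f(y_k)-f(x^*))/\kappa$, so every ball defining $R_{k+1}$ has its squared radius reduced by at least $2(f(y_k)-f(x^*))/(\overline{\alpha}\kappa)$. Combined with the width bound from Lemma~\ref{lem:width_R} (applied at an arbitrary point of $R_k$, which exists by no-restart), this forces $R_{k+1}=\emptyset$---hence a restart---unless $f(y_k)-f(x^*)\le\varepsilon(f(x_0)-f(x^*))$. This dichotomy is valid for any $\overline{\alpha}$, which is exactly what the lemma needs since it must also cover the phases where the estimate is too large. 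Your proof can be repaired by replacing the last paragraph with this $\fval$-shrinkage argument; the rest stands.
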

\begin{proof}
The decrease guarantee and the smoothness imply that
\[
\frac{\|\nabla f(y_{k})\|^{2}}{2\beta}\leq f(y_{k})-f(x^{*})\leq f(x_{0})-f(x^{*}).
\]
Therefore, all the spheres found by the geometric politician have radius
squared 
at most
$D^{2}$ where, denoting $\overline{\alpha}$ for the convexity upper bound the algorithm
is currently using,
\[
D^{2}= \max_{k \geq 1} \frac{\|\nabla f(y_{k})\|^{2}}{\overline{\alpha}^{2}}\leq\frac{2\beta(f(x_{0})-f(x^{*}))}{\overline{\alpha}^{2}} .
\]
Lemma \ref{lem:width_R} shows that for any step $k$,
there is $i \in [k]$ such that for all $x\in R_{k}$,
\[
\|x-c_{i}\|^{2}\geq r_{i}^{2}-\frac{48\beta k^{2}}{\overline{\alpha}^{2}}\left(\frac{\vol R_{k}}{D^{n}\omega_{n}}\right)^{1/n}(f(x_{0})-f(x^{*})) .
\]
Let $k=\Theta(n\log(\frac{\kappa n}{\varepsilon}))$ and recall the discussion in Section \ref{sec:geopol} about the volume decrease of the geometric politician with the center of gravity (the same discussion applies to the John ellipsoid). We see that if the algorithm does not restart $\overline{\alpha}$ within the first $k$ iterations then we have
\[
\frac{\vol R_{k}}{D^{n}\omega_{n}} = \left(O\left(\frac{\varepsilon}{\kappa^2 k^2}\right) \right)^{n} ,
\]
and hence (for an appropriate numerical constant in $k$)
\begin{equation}
\|x-c_{i}\|^{2}\geq r_{i}^{2}-\frac{\varepsilon(f(x_{0})-f(x^{*}))}{\overline{\alpha}\kappa}.\label{eq:width_R}
\end{equation}

Recall from \eqref{eq:forlater} that
\[
f(y_{k+1})\leq f(y_{k})-\frac{f(y_{k})-f(x^{*})}{\kappa} ,
\]
and therefore we have (by the improvement of the previous balls):
\[
R_{k+1}\subset\left\{ \|x-c_{i}\|^{2}\leq r_{i}^{2}-\frac{2(f(y_{k})-f(x^{*}))}{\overline{ \alpha} \kappa}\right\} \cap R_{k}.
\]
However, from \eqref{eq:width_R}, we know that either the above intersection
is empty or $f(y_{k})-f(x^{*})< \varepsilon (f(x_{0})-f(x^{*}))$. This
proves the statement.\end{proof}
\begin{theorem}
\label{th:n_convergence_2} We have that
\[
f(y_{k})-f(x^{*})\leq\kappa \left(1-\frac{1}{\Theta(n\log(\kappa))}\right)^{k}\left(f(x_{0})-f(x^{*})\right).
\]
\end{theorem}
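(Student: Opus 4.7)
The strategy is to combine the per-window progress guarantee of Lemma \ref{lem:n_convergence} with a global bound on the number of restarts of $\overline{\alpha}$, and then glue these together over the $k$ iterations.

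First I would bound the total number of restarts across the entire run of SD$+$ by $R = O(\log \kappa)$. A restart fires exactly when $R_k(\overline{\alpha}) = \emptyset$; by construction the updated $\overline{\alpha}$ is $\alpha'/4$ where $\alpha' < \overline{\alpha}$ is the largest value still yielding $R_k(\alpha') \ne \emptyset$, so each restart multiplies $\overline{\alpha}$ by a factor strictly less than $1/4$. On the other hand, once $\overline{\alpha} \leq \alpha$ (the true strong convexity), the strong convexity inequality ensures $x^* \in \mB(y_i, \overline{\alpha}, \mathrm{fval})$ for every $i$, so $R_k(\overline{\alpha})$ is nonempty forever after and no further restart can occur. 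Taking $\overline{\alpha}_0 \leq \beta$ (handling $+\infty$ by noting the first restart immediately drops $\overline{\alpha}$ below an $O(\beta)$ value), this gives $R \leq \log_4(\beta/\alpha) = O(\log \kappa)$.

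Next I would partition the $k$ iterations into consecutive windows of length $N = \Theta(n \log \kappa)$, where the constant is chosen so that Lemma \ref{lem:n_convergence} applied with $\varepsilon = 1/e$ governs each window. Restricting attention to the regime $n \log \kappa \lesssim \kappa$ (the relevant one; otherwise the $(1-1/\kappa)$ rate of Theorem \ref{th:kappa_convergence} already dominates, and we take the min of the two rates) we have $\log(\kappa n / \varepsilon) = O(\log \kappa)$, so the lemma says that within each window \emph{either} a restart occurs, \emph{or} the gap is divided by $e$. Since the politician guarantees monotone decrease of function values, windows that contain a restart at least do not worsen the gap, and there are at most $R$ such ``bad'' windows among the $\lfloor k/N \rfloor$ windows. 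Therefore
\[
f(y_k) - f(x^*) \;\leq\; e^{R}\cdot e^{-k/N}\,(f(x_0) - f(x^*)) \;\leq\; \kappa^{O(1)} \left(1 - \frac{1}{\Theta(n\log\kappa)}\right)^{k}(f(x_0)-f(x^*)).
\]

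Finally, to match the stated bound exactly, I would absorb the $\kappa^{O(1)}$ prefactor into the single $\kappa$ by enlarging the constant inside $\Theta(n\log\kappa)$: writing $\kappa^{c-1} \leq (1-1/N)^{-O(N\log\kappa)}$ trades the extra powers of $\kappa$ for a slightly slower rate $(1-1/N')^k$ with $N' = O(N)$, leaving a clean $\kappa$ in front. The main obstacle I anticipate is the restart-counting step: one must verify that the monotonicity of $f(y_j)$ really does prevent the gap from inflating during ``bad'' windows and that restarts never reset any quantity other than $\overline{\alpha}$; after that, the window-gluing and constant bookkeeping are routine given Lemma \ref{lem:n_convergence}.
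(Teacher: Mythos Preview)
Your proposal is correct and follows essentially the same route as the paper. Both arguments (i) fall back on Theorem~\ref{th:kappa_convergence} when $\kappa$ is small relative to $n$, (ii) bound the total number of restarts of $\overline{\alpha}$ by $O(\log\kappa)$ via the geometric decrease of $\overline{\alpha}$ and the fact that no restart occurs once $\overline{\alpha}\le\alpha$, and (iii) tile the iterations into windows governed by Lemma~\ref{lem:n_convergence} so that all but $O(\log\kappa)$ windows yield constant-factor progress.

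A few minor presentational differences are worth noting. The paper keeps $\varepsilon$ free, derives that accuracy $\varepsilon$ is reached after $T=\Theta\bigl(n\log(\tfrac{n\kappa}{\varepsilon})\log\kappa\bigr)$ iterations, and then (implicitly) inverts to extract the rate; you instead fix $\varepsilon=1/e$, iterate window by window, and make the resulting $\kappa^{O(1)}$ prefactor and its absorption into the $\Theta(n\log\kappa)$ constant explicit --- this is a cleaner write-up of the same computation. Your regime cut $n\log\kappa\lesssim\kappa$ is slightly sharper than the paper's $\kappa\ge n$, but both serve only to ensure $\log(n\kappa)=O(\log\kappa)$ so that the window length simplifies. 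Finally, your caution about the ``bad'' windows is apt but resolves exactly as you say: the politician's decrease guarantee makes $f(y_j)$ monotone, so restart windows contribute at worst a factor~$1$, and a restart only modifies $\overline{\alpha}$.
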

\begin{proof}
If $\kappa<n$, the statements follows from Theorem \ref{th:kappa_convergence}.
Hence, we can assume $\kappa\geq n$.

Set $T=\Theta(n\log(\frac{n\kappa}{\varepsilon})\log(\kappa))$, Lemma
\ref{lem:n_convergence} shows that for every $\Theta(n\log(\frac{n\kappa}{\varepsilon}))$
iteration, the algorithm either finds $y$ such that 
\[
f(y)-f(x^{*})\leq \varepsilon \left(f(x_{0})-f(x^{*}) \right)
\]
or decreases $\alpha_{k}$ by a constant where $\alpha_{k}$ is the
convexity upper bound the algorithm is using at $k^{th}$ iteration.
Note that $\alpha_{1}\leq\beta$ because of the line search, and thus the algorithm
can restart $\alpha_{k}$ at most $\log(\kappa)$ many times. Hence,
after $T$ iterations, we must have
\[
f(y_{T})-f(x^{*})\leq \varepsilon\left(f(x_{0})-f(x^{*})\right) ,
\]
thus concluding the proof.
\end{proof}

\bibliographystyle{plainnat}
\bibliography{newbib}

\end{document}